\newcounter{case}
\renewcommand{\thecase}{\arabic{case}}
\newtheorem{theorem}{Theorem}[section]
\newtheorem{lemma}[theorem]{Lemma}
\newtheorem{problem}[theorem]{Problem}
\newtheorem{conjecture}[theorem]{Conjecture}
\newtheorem{remark}[theorem]{Remark}
\newtheorem*{theorem3}{Theorem~\ref{thm:mainTHM} (restated)}
\def\ov{\overline}
\newcommand{\Aut}{\hbox{{\rm Aut}}}
\newcommand{\ZZ}{\mathbb{Z}}
\newcommand{\Cay}{\mathrm{Cay}}
\theoremstyle{definition}
\begin{document}

\begin{center}
{\bf \Large Canonical double covers of circulants} \\ [+4ex]
Blas Fernandez{\small$^{~a}$} and Ademir Hujdurovi\'c{\small$^{~a,b}$}\\ [+2ex]
{\it \small $^a$University of Primorska, UP IAM, Muzejski trg 2, 6000 Koper, Slovenia\\
	$^b$University of Primorska, UP FAMNIT, Glagolja\v ska 8, 6000 Koper, Slovenia
}

\end{center}

\begin{abstract}
The canonical double cover $B(X)$ of a graph $X$ is the direct product of $X$ and $K_2$. If $Aut(B(X)) \cong Aut(X) \times \mathbb{Z}_2$ then $X$ is called stable; otherwise $X$ is called unstable.  An unstable graph is nontrivially unstable if it is connected, non-bipartite and distinct vertices have different neighborhoods. Circulant is a Cayley graph on a cyclic group. Qin et al.  conjectured  in [J. Combin. Theory Ser. B 136 (2019), 154-169] that there are no nontrivialy unstable circulants of odd order. In this paper we prove this conjecture.
\end{abstract}

\begin{quotation}
\noindent {\em Keywords: circulant, canonical double cover, unstable graph, automorphism group}
\end{quotation}
\begin{quotation}
\noindent {\em AMS Subject Classification (2010): 05C25, 05C76}
\end{quotation}
\section{Introduction}\label{sec:intro}
\noindent

All groups considered in this paper are finite and all graphs are finite, simple and undirected. For a graph $X$, we denote by $V(X)$, $E(X)$ and $\Aut(X)$ the vertex set, the edge set and the automorphism group of $X$, respectively. 
{\it The canonical double cover} (also called {\it the bipartite double cover} or {\it the Kronecker cover}) of a graph $X$, denoted by $B(X)$, is the direct product $X \times K_2$ (where $K_2$ denotes the complete graph on two vertices).  
This means that $V(B(X))=V(X)\times \ZZ_2$ and $E(B(X))=\{\{(x,0),(y,1)\}\mid \{x,y\}\in E(X)\}$. 
Canonical double covers have been studied by several authors, see for example \cite{FKMM, Krnc, NS96, W, Z}.
It is well-known that $B(X)$ is connected if and only if $X$ is connected and non-bipartite, see \cite{HIK}.
It can also be easily seen that $\Aut(B(X))$ contains a subgroup isomorphic to $\Aut(X)\times \ZZ_2$.  However, determining the full automorphism group of $B(X)$ is not as trivial. Hammack and Imrich \cite{HI} investigated vertex-transitivity of direct product of graphs, and proved that for a non-bipartite graph $X$ and a bipartite graph $Y$, direct product $X\times Y$ is vertex-transitive if and only if both $B(X)$ and $Y$ are vertex-transitive. Hence, the problem of vertex-transitivity of direct product of graphs reduces to the problem of vertex-transitivity of canonical double covers.
If $\Aut(B(X))$ is isomorphic to $\Aut(X)\times \ZZ_2$ then the graph $X$ is called {\it stable}, otherwise it is called {\it unstable}. 
This concept was first defined by Maru\v si\v c et al. \cite{MSS89} and studied later most notably by Surowski \cite{S01,S03}, Wilson \cite{W08}, Lauri et al. \cite{LMS15} and Qin et al. \cite{QXZ}. 
A graph is said to be {\em irreducible} (or {\em vertex-determining}) if distinct vertices have different neighbours, and {\em reducible} otherwise.
It is easy to see that the following graphs are all unstable: disconnected graphs, bipartite graphs with non-trivial automorphism group, and reducible graphs.
An unstable graph is said to be {\em nontrivialy unstable} if it is non-bipartite, connected, and irreducible.

For a group $G$ and an inverse closed subset $S\subseteq G\setminus \{1_G\}$, the {\it Cayley graph} $Cay(G,S)$ on $G$ with connection set $S$ is defined as the graph with vertex set $G$, with two vertices $x,y\in G$ being adjacent if and only if $x^{-1}y\in S$. A Cayley graph on a cyclic group is called {\em circulant}. Sabidussi \cite{Sab} proved that a graph $X$ is a Cayley graph on a group $G$ if and only if $\Aut(X)$ has a regular subgroup isomorphic to $G$. It is easy to see that if $X$ is a Cayley graph on a group $G$, then its canonical double cover $B(X)$ is a Cayley graph on $G\times \ZZ_2$. The converse is not true in general, that is $B(X)$ can be a Cayley graph even if the graph $X$ is not Cayley. 
The problem of characterizing graphs with Cayley canonical double covers was studied by Maru\v si\v c, Scapellato and Zagaglia Salvi \cite{MSS92}, who introduced the class of generalized Cayley graphs (see also \cite{HKM15, HKPT17}) and proved that if the canonical double cover of a generalized Cayley graph
X is a stable graph, then X is a Cayley graph. The characterization of graphs whose canonical double covers are Cayley graphs was given by the second author in \cite{H19}.  

The problem of characterization of vertex-transitive unstable graphs was posed in \cite[Problem 5.7]{DMS19}. However, the problem is difficult even when restricted to the class of circulant graphs. The problem of classification of nontrivially  unstable circulants was posed in \cite[Problem 1.2]{QXZ}. Qin et al proved that every circulant of odd prime order is stable (see \cite[Theorem 1.4]{QXZ}, and posed the following conjecture.
\begin{conjecture}\cite[Conjecture 1.3]{QXZ}\label{the conjecture}
There is no nontrivially unstable circulant of odd order.
\end{conjecture}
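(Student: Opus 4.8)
\emph{Reformulation.} Write $X=\Cay(\ZZ_n,S)$ with $n$ odd. Since $\ZZ_n$ has no subgroup of index~$2$, every connected circulant of odd order is automatically non-bipartite, so the hypothesis ``nontrivially unstable'' reduces to ``$X$ connected and irreducible''. As $\gcd(n,2)=1$ we have $\ZZ_{2n}\cong\ZZ_n\times\ZZ_2$, so $B(X)$ is itself a circulant: $B(X)=\Cay(\ZZ_{2n},\tilde S)$, where $\tilde S$ consists of the odd elements of $\ZZ_{2n}$ whose residue modulo $n$ lies in $S$ (so $|\tilde S|=|S|$). Then $B(X)$ is connected and irreducible, its fibres over $X$ are the pairs $\{x,x+n\}$, and its deck transformation is the translation $\tau\colon x\mapsto x+n$, so that $\langle\tau\rangle$ is the unique subgroup of order $2$ of $\ZZ_{2n}$. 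One checks that the following are equivalent: $X$ is stable; $\langle\tau\rangle\lhd\Aut(B(X))$; $\tau$ is central in $\Aut(B(X))$; $\Aut(B(X))$ preserves the fibre partition $\{\{x,x+n\}:x\in\ZZ_{2n}\}$. (The last implication uses $B(X)/\langle\tau\rangle=X$ together with the fact that an automorphism of a connected bipartite graph preserves its bipartition, so a fibre-preserving automorphism acts as the $\tau$-translation on every fibre or on none.) It therefore suffices to prove $\langle\tau\rangle\lhd\Aut(B(X))$ for every connected irreducible circulant $X$ of odd order, and the plan is to do this by induction on $n$, the base case $n$ prime being \cite[Theorem~1.4]{QXZ}.

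\emph{The normal case.} Suppose $\ZZ_{2n}\lhd G:=\Aut(B(X))$. Then every element of $G$ is an affine map $x\mapsto cx+d$ with $c\in\ZZ_{2n}^{*}$, $d\in\ZZ_{2n}$, $c\tilde S=\tilde S$; since $c$ is odd, reduction modulo $n$ gives $c\tilde S=\tilde S\iff\ov cS=S$, whence $|G|=2n\cdot|\Aut(\ZZ_n)_S|$. The stabiliser in $G$ of a part of the bipartition acts faithfully and affinely on that part, which forces $\Aut(X)$ to coincide with its affine part $\ZZ_n\rtimes\Aut(\ZZ_n)_S$; hence $|G|=2|\Aut(X)|$ and, as $\Aut(X)\times\ZZ_2\le G$ always holds, $G=\Aut(X)\times\ZZ_2$. (Already $\ZZ_{2n}\lhd G$ gives $\tau$ central, since every automorphism of $\ZZ_{2n}$ fixes its unique involution.)

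\emph{The imprimitive case.} Now suppose $\ZZ_{2n}\not\lhd G$. As $2n$ is composite and $B(X)$ is neither complete nor edgeless, $G$ is not primitive (a primitive group containing a regular cyclic subgroup of composite order is $2$-transitive, forcing the graph to be complete or edgeless), so $G$ is imprimitive, and --- every block system of a circulant being the coset partition of a subgroup --- $G$ has a block system given by the cosets of some proper nontrivial $P\le\ZZ_{2n}$. Here the structure theory of automorphism groups of circulant graphs (Schur rings over cyclic groups; Klin--P\"oschel, Muzychuk, Kov\'acs) applies: apart from joins --- whose prototype is $X=K_n$, where $B(X)=K_{n,n}-nK_2$ and $\Aut(B(X))=\Sym(n)\times\ZZ_2=\Aut(X)\times\ZZ_2$ --- and finitely many explicit exceptional graphs, $X$ (or its complement, which has the same automorphism group) admits a nontrivial decomposition, either as a generalised wreath product over subgroups of $\ZZ_n$ or as a product over a coprime factorisation $n=n_1n_2$, reducing it to connected circulants of strictly smaller odd order in a way compatible with $\tau$ and the bipartition of $B(X)$; the shape of the subgroup lattice of $\ZZ_{2n}=\ZZ_2\times\prod_i\ZZ_{p_i^{a_i}}$ with all $p_i$ odd enters at this point. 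One then argues that an automorphism of $B(X)$ violating the fibre-partition condition would induce automorphisms of the smaller circulants that likewise violate their fibre-partition conditions --- unless it forces $S$ to be a union of cosets of a nontrivial subgroup, contradicting irreducibility of $X$ --- and the induction hypothesis gives a contradiction.

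\emph{Main obstacle.} The real work is in the imprimitive case. Sections of generalised wreath products need not inherit irreducibility, so one is forced to reason about circulants that are themselves --- reducibly, hence trivially --- unstable, and to show that this instability is absorbed by the wreath structure and does not lift to an unexpected automorphism of $B(X)$. Keeping the block system, the fibres of the canonical bipartition, and the deck involution $\tau$ under control \emph{simultaneously} throughout the reduction, and verifying that oddness of $n$ --- which makes $B(X)$ a circulant, makes its bipartition canonical, makes $\langle\tau\rangle$ the unique involution of $\ZZ_{2n}$, and makes the count in the normal case exact --- continues to govern every smaller piece, is the technical heart of the proof.
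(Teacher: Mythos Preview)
Your reformulation and the normal case are correct and match the paper's Lemmas~2.2--2.6. The gap is in the imprimitive case: you invoke ``structure theory of automorphism groups of circulant graphs'' and a decomposition into ``generalised wreath products or products over coprime factorisations'', but you do not specify which structural theorem you are using, and---as you yourself flag in the \emph{Main obstacle} paragraph---sections of generalised wreath products need not inherit irreducibility, so the induction hypothesis cannot be invoked directly on the pieces. You describe this as ``the technical heart of the proof'' and list the bookkeeping that must be done, but you do not actually carry it out; as written, the imprimitive case is a strategy rather than an argument.

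The paper avoids this obstacle by a different decomposition. Rather than decomposing $X$ or $B(X)$ along a block system and inducting on $n$, it decomposes the connection set $\tilde S$ into the orbits $S_1,\ldots,S_k$ of the vertex stabiliser $A_0=\Aut(B(X))_0$. A lemma of Nowitz--Watkins type shows that $A_0$ acts on each $\Gamma_i=\Cay(\langle S_i\rangle,S_i)$, so every $\Gamma_i$ is a connected \emph{arc-transitive} bipartite circulant of order $2m_i$ with $m_i$ odd and even valency. The Kov\'acs--Li classification of arc-transitive circulants is then applied to each $\Gamma_i$: a short argument (the paper's Lemma~3.1) shows that a nontrivially unstable odd-order circulant cannot have arc-transitive double cover, and combining this with the classification yields that either $\Aut(\Gamma_i)_0=\Aut(\Gamma_i)_{m_i}$, or $\Gamma_i\cong\Sigma_i\wr\ov{K_{d_i}}$ with $\Sigma_i$ irreducible and satisfying the analogous stabiliser equality. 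In the first case $A_0$ fixes $m$ and we are done. In the second case $A_0$ fixes each coset $m+H_i$ setwise, where $H_i\le\ZZ_{2m}$ has order $d_i$; if $\gcd(d_1,\ldots,d_k)>1$ then $\tilde S$ is a union of cosets of a nontrivial subgroup and $B(X)$ is reducible, contradicting irreducibility of $X$, while if the gcd is $1$ then $\bigcap_i(m+H_i)=\{m\}$ and again $A_0$ fixes $m$. The key idea you are missing is this passage to arc-transitive subgraphs via stabiliser orbits on the connection set, which replaces the induction you propose and sidesteps the irreducibility-of-sections problem entirely.
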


In this paper we prove Conjecture~\ref{the conjecture}, that is we prove the following theorem.

\begin{theorem}\label{thm:mainTHM}
Let $X$ be a connected irreducible circulant of odd order and let $B(X)$ be its canonical double cover. Then $\Aut(B(X))\cong \Aut(X)\times \ZZ_2$.
\end{theorem}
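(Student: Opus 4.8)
\medskip
\noindent\textbf{Proof strategy.}\quad
The first step is to recast stability as the centrality of one involution in $\Aut(B(X))$. Since $X$ is connected and non-bipartite, $B(X)$ is connected and bipartite, so it carries a unique bipartition $\{P_0,P_1\}$ with $P_i=\{(x,i):x\in V(X)\}$; put $A=\Aut(B(X))$ and let $A^+\le A$ be the subgroup preserving $\{P_0,P_1\}$, an index-$2$ subgroup since the deck involution $\tau\colon(x,i)\mapsto(x,1-i)$ lies in $A\setminus A^+$. An element of $A^+$ is the same thing as a pair $(\sigma_0,\sigma_1)$ of permutations of $V(X)$ with $\{x,y\}\in E(X)\iff\{\sigma_0(x),\sigma_1(y)\}\in E(X)$, and such a pair commutes with $\tau$ precisely when $\sigma_0=\sigma_1$; since a pair with $\sigma_0=\sigma_1$ satisfying this condition is just an automorphism of $X$, this yields $C_{A^+}(\tau)=\{(\alpha,\alpha):\alpha\in\Aut(X)\}\cong\Aut(X)$. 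Using $|A|=2|A^+|$ together with the fact that $\langle C_{A^+}(\tau),\tau\rangle\cong\Aut(X)\times\ZZ_2$ sits inside $A$, comparing orders shows that $X$ is stable if and only if $A^+=C_{A^+}(\tau)$, i.e.\ if and only if $\tau$ is central in $A$. So the whole task is to show that the deck involution is central in $\Aut(B(X))$; the irreducibility of $X$---equivalently, triviality of the translation stabilizer of its connection set, which also makes $B(X)$ irreducible---is what must eventually be used to force this.

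The hypothesis that $n=|V(X)|$ is odd is now brought in: since $\ZZ_{2n}\cong\ZZ_n\times\ZZ_2$, the graph $B(X)$ is itself a circulant $\Cay(\ZZ_{2n},T)$, where $T$ is a symmetric set of odd residues lifting the connection set of $X$, and the deck involution becomes translation by $n$, i.e.\ $\tau=\rho^n$ with $\rho\colon v\mapsto v+1$. Thus the regular cyclic group $R=\langle\rho\rangle\cong\ZZ_{2n}$ lies in $A$ and $\tau$ is its unique involution. If $R$ is normal in $A$, then $A\le\mathrm{Hol}(\ZZ_{2n})=R\rtimes\ZZ_{2n}^{*}$; since every $c\in\ZZ_{2n}^{*}$ is odd we have $cn\equiv n\pmod{2n}$, so conjugation by an arbitrary affine map fixes $\tau$, and $\tau$ is central in $A$. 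Likewise, if the Schur ring over $\ZZ_{2n}$ underlying $A$ splits as a tensor (direct) product along $\ZZ_{2n}=\ZZ_2\times\ZZ_n$, then---the only Schur ring over $\ZZ_2$ being the trivial one---$A=\langle\tau\rangle\times A'$ with $\langle\tau\rangle$ central. In both cases $X$ is stable.

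It remains to treat the case in which $R$ is not normal in $A$ and no tensor splitting as above occurs. Here I would appeal to the classification of Schur rings over cyclic groups (Leung--Man) and the resulting description of automorphism groups of circulant graphs (Kov\'acs), which then provide a nontrivial $A$-invariant block system on $\ZZ_{2n}$, that is, a proper nontrivial subgroup $H$ of $\ZZ_{2n}$ whose cosets are permuted by $A$. If $H$ is the fibre subgroup $\langle\tau\rangle=n\ZZ_{2n}$, then $\langle\tau\rangle\trianglelefteq A$, hence $\tau$ is central and $X$ is stable. Otherwise the $H$-block system restricts to a nontrivial block system for $\Aut(X)$ on a part $P_0\cong\ZZ_n$, so $X$ itself decomposes over a proper quotient circulant $\overline X$ of odd order; alternatively, if $\Aut(X)$ turns out to be primitive on $\ZZ_n$ then $X$ must be complete or of prime order, and stability holds by the prime-order case of Qin et al.\ (and a direct check for $K_n$). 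The plan is then to run an induction on $|V(X)|$: assuming $X$ is a counterexample of least order, the decomposition together with the inductive hypothesis applied to $\overline X$ (and to the circulant induced on a block) is used to show that $\tau$ is central in $A$ after all, a contradiction.

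The main obstacle, absorbing essentially all of the work, is to carry out this inductive descent rigorously. Three points require care. First, to stay within the class of connected irreducible circulants of odd order one needs the smaller circulants again to have odd order; because the Sylow $2$-subgroup of $\ZZ_{2n}$ is just $\ZZ_2$, one can often choose the block subgroup $H$ to be compatible with $\tau$ (e.g.\ with $n\ZZ_{2n}\le H$, so that $|\ZZ_{2n}/H|$ divides $n$), and one must show that when this fails the holomorph or tensor analysis already applies. Second, irreducibility and non-bipartiteness are not inherited by quotients, so the induction has to be set up for a suitably reduced circulant, with the degenerate quotients---becoming bipartite, reducible, or trivial---disposed of directly (a bipartite or complete multipartite quotient, for instance, pins down $T$ modulo $H$ closely enough to analyze $A$ by hand). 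Third, one must verify that the decomposition of $A$ provided by the structure theory is genuinely compatible with $\tau$, so that a failure of $\tau$ to be central would propagate to one of the strictly smaller circulants, contradicting minimality and thus establishing Theorem~\ref{thm:mainTHM}.
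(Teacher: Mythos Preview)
Your opening reductions---recasting stability as centrality of the deck involution $\tau=\rho^{n}$ in $A=\Aut(B(X))$, realizing $B(X)$ as $\Cay(\ZZ_{2n},T)$, and disposing of the case where $R=\langle\rho\rangle$ is normal via the holomorph---agree with the paper (its Lemmas~2.2, 2.3, 2.6). From that point on, however, you and the paper diverge sharply.

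The paper does \emph{not} argue by induction on the order, and it does not invoke the Leung--Man Schur-ring classification or look for a global $A$-invariant block system. Instead it breaks the connection set $T$ into its $A_0$-orbits $S_1,\dots,S_k$ and shows (via a Nowitz--Watkins type lemma, Lemma~2.9) that $A_0$ acts as a group of automorphisms of each $\Gamma_i=\Cay(\langle S_i\rangle,S_i)$, making every $\Gamma_i$ an \emph{arc-transitive} bipartite circulant of even order not divisible by~$4$. The Kov\'acs--Li classification of arc-transitive circulants then yields a dichotomy (Lemma~3.2): either $\Aut(\Gamma_i)_0=\Aut(\Gamma_i)_m$, or $\Gamma_i$ is a wreath product $\Sigma_i\wr\overline{K_{d_i}}$. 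In the first case $A_0$ already fixes $m$; in the second, $A_0$ fixes the coset $m+H_i$ for a subgroup $H_i$ of order $d_i$. Intersecting these cosets over all $i$ finishes the proof: if $\gcd(d_1,\dots,d_k)>1$ then $T$ is a union of $H$-cosets and $B(X)$ (hence $X$) is reducible, contradicting the hypothesis; if $\gcd(d_1,\dots,d_k)=1$ then $\bigcap_i(m+H_i)=\{m\}$, so $A_0=A_m$ and $X$ is stable. No induction, and no analysis of how $A$ acts on a quotient, is needed.

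Your alternative---Schur-ring structure plus descent to a smaller circulant---is a reasonable heuristic, but as written it is a strategy rather than a proof, and the acknowledged obstacles are real. The most serious is your third point: even granting an $A$-invariant block system with blocks the cosets of some $H\le\ZZ_{2n}$, you need to control not the induced action of $A$ on the quotient (to which the inductive hypothesis might apply) but the kernel of that action; there is no a priori reason this kernel centralizes $\tau$, and nothing in your outline addresses it. Similarly, your first and second points (``one can often choose $H$ compatible with $\tau$'', ``degenerate quotients\ldots\ disposed of directly'') are assertions without mechanism. The paper sidesteps all of this by never passing to a quotient: the orbit-by-orbit reduction keeps everything inside $\ZZ_{2n}$, and the arc-transitive classification provides exactly the leverage needed on each piece. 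If you want to push your approach through, the missing ingredient is a concrete structural statement about the kernel on $H$-blocks (or, equivalently, about generalized wreath decompositions of circulants of order $2n$ with $n$ odd) strong enough to force $\tau$ into the centre; absent that, the inductive step does not close.
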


\begin{remark}
Using Theorem~\ref{thm:mainTHM}, the automorphism group of a canonical double cover of any circulant $X$ of odd order can be determined in terms of the autormorphism group of $X$. Namely, if $X$ is a connected reducible circulant, then there exist a positive integer $d>1$ and a connected irreducible circulant $Y$, such that  $X\cong Y\wr \overline{K_d}$.
It is now easy to see that $B(X)\cong (B(Y)\wr \overline{K_d})$. Using the result of Sabidussi \cite{S59} on the automorphism group of wreath product of graphs, we obtain $Aut(B(X))\cong \Aut(B(Y)\wr S_d\cong (Aut(Y)\times \ZZ_2)\wr S_d$. 
\end{remark}

\section{Preliminaries}

Following \cite{PS}, an automorphism $\gamma$ of a bipartite graph $Y$ is called a {\em strongly switching involution}  if $\gamma$ is an involution that swaps the two colour
classes and fixes no edge. 
For a graph $X$ and a partition   $\mathcal{P}$ of $V(X)$, the {\it quotient graph of $X$ with respect to $\mathcal{P}$} is the graph whose vertex set is $\mathcal{P}$ with  $B_1,B_2\in \mathcal{P}$ being adjacent if and only if there exist $x\in B_1$ and $y\in B_2$ such that $\{x,y\}\in E(X)$.
The following result is proved in \cite[Proposition 2.4]{PS}.

\begin{lemma}\label{lem:strongly switching involution}
A bipartite graph $Y$ is isomorphic to the canonical double cover of a graph $X$ if and only if $\Aut(Y)$ contains a strongly switching involution $\gamma$ such that $X$ is isomorphic to the quotient graph of $Y$ with respect to the orbits of $\langle \gamma \rangle$.
\end{lemma}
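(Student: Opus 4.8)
The plan is to prove both implications directly by exhibiting explicit involutions and isomorphisms. For the forward direction, suppose $Y\cong B(X)$; identifying $Y$ with $B(X)$, I would take $\gamma$ to be the \emph{layer-swapping} map $\gamma(x,i)=(x,i+1)$, with the second coordinate read modulo $2$. A short check shows that $\gamma$ is an automorphism (it sends the edge $\{(x,0),(y,1)\}$ to $\{(x,1),(y,0)\}$, which is again an edge because $\{x,y\}\in E(X)$), that it is an involution, and that it interchanges the two colour classes $V(X)\times\{0\}$ and $V(X)\times\{1\}$. It fixes no edge, since $\{(x,0),(y,1)\}$ is moved to $\{(x,1),(y,0)\}$, and these coincide only if $x=y$, which is impossible as $X$ is loopless. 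Thus $\gamma$ is a strongly switching involution. Its orbits are exactly the pairs $\{(x,0),(x,1)\}$, one for each $x\in V(X)$, so sending such an orbit to $x$ identifies the orbit set with $V(X)$; under this identification two orbits are adjacent in the quotient precisely when the corresponding vertices are adjacent in $X$, whence the quotient graph is isomorphic to $X$.

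For the converse, let $\gamma$ be a strongly switching involution of the bipartite graph $Y$ with colour classes $A$ and $B$, and let $X$ be the quotient of $Y$ by the orbits of $\langle\gamma\rangle$. Since $\gamma$ swaps $A$ and $B$ and $A\cap B=\emptyset$, it has no fixed vertex; hence every orbit has the form $O_a=\{a,\gamma(a)\}$ with $a\in A$ and $\gamma(a)\in B$, and choosing the $A$-representative gives a bijection $a\mapsto O_a$ between $A$ and $V(X)$. I would then define $\phi\colon V(Y)\to V(X)\times\ZZ_2$ by $\phi(a)=(O_a,0)$ for $a\in A$ and $\phi(\gamma(a))=(O_a,1)$, which is a bijection onto $V(B(X))$.

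It remains to verify that $\phi$ both preserves and reflects adjacency, and this is the step I expect to require the most care, since the quotient relation only records that \emph{some} edge joins two orbits, whereas $B(X)$ demands a specific crossing edge. Because $Y$ is bipartite, every edge runs from $A$ to $B$, so any edge between distinct orbits $O_a$ and $O_{a'}$ is one of the two \emph{crossing} edges $\{a,\gamma(a')\}$ or $\{\gamma(a),a'\}$; as $\gamma$ is an automorphism and $\gamma(\{a,\gamma(a')\})=\{\gamma(a),a'\}$, one of these lies in $E(Y)$ if and only if the other does. Consequently $O_a$ and $O_{a'}$ are adjacent in $X$ if and only if $\{a,\gamma(a')\}\in E(Y)$, which is exactly the condition for $\{\phi(a),\phi(\gamma(a'))\}=\{(O_a,0),(O_{a'},1)\}$ to be an edge of $B(X)$; so $\phi$ is an isomorphism. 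The hypothesis that $\gamma$ fixes no edge enters precisely here to guarantee that $X$ is loopless: an edge inside a single orbit would have the form $\{a,\gamma(a)\}$, which $\gamma$ fixes, so no such edge exists and the quotient genuinely matches the simple, loopless graph $B(X)$. This completes both directions.
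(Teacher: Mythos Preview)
Your argument is correct and self-contained; both directions are handled cleanly, and you correctly identify where each hypothesis (bipartiteness, the involution swapping colour classes, no fixed edge) is actually used. The paper itself does not supply a proof of this lemma: it simply records the statement and defers to \cite[Proposition~2.4]{PS}, so there is no in-paper argument to compare against. Your write-up is essentially the standard direct verification one would expect, and it would serve perfectly well as the missing proof.
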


The following lemma gives a necessary and sufficient condition for the stability of a graph, and it has been proved in \cite[Corollary 2.5]{H19}. However, we give the proof here for the sake of completeness.
\begin{lemma}\label{lem:tau central}
Let $X$ be a connected non-bipartite graph, and let $B(X)$ be its canonical double cover. Let $\tau$ be the automorphism of $B(X)$ defined by $\tau(x,i)=(x,i+1)$. Then $X$ is stable if and only if $\tau$ is central in $\Aut(B(X)$.
\end{lemma}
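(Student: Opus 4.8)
The plan is to prove the two implications separately, using the description of $\Aut(B(X))$ via the strongly switching involution and exploiting that $X$ is non-bipartite. Throughout, write the vertices of $B(X)$ as pairs $(x,i)$ with $x\in V(X)$ and $i\in\ZZ_2$, and recall that $\tau(x,i)=(x,i+1)$ is a strongly switching involution of $B(X)$ whose orbits are the fibres $\{x\}\times\ZZ_2$; the quotient of $B(X)$ by $\langle\tau\rangle$ is $X$ itself. Also keep in mind the canonical copy of $\Aut(X)$ inside $\Aut(B(X))$, namely the maps $\hat\alpha\colon (x,i)\mapsto(\alpha(x),i)$ for $\alpha\in\Aut(X)$, so that $\Aut(X)\times\ZZ_2$ is realized inside $\Aut(B(X))$ as $\langle\{\hat\alpha:\alpha\in\Aut(X)\},\tau\rangle$.

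First I would do the easy direction: if $\tau$ is central in $\Aut(B(X))$, then $X$ is stable. Since $X$ is connected and non-bipartite, $B(X)$ is connected, so $\langle\tau\rangle$ acts semiregularly and $\tau$ generates a normal (indeed central) subgroup of order $2$; hence the quotient map $B(X)\to B(X)/\langle\tau\rangle = X$ induces a homomorphism $\pi\colon\Aut(B(X))\to\Aut(X)$ (every automorphism of $B(X)$ permutes the $\tau$-orbits because $\langle\tau\rangle$ is normal, and since $B(X)$ is connected non-bipartite an automorphism either preserves or swaps the two colour classes, and in both cases it descends to a well-defined automorphism of the quotient). The kernel of $\pi$ consists of automorphisms fixing every fibre setwise; such an automorphism is either the identity or $\tau$ on each fibre, and a connectivity argument (two adjacent vertices force the same choice on their fibres) shows the kernel is exactly $\langle\tau\rangle$. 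Thus $\Aut(B(X))/\langle\tau\rangle$ embeds in $\Aut(X)$; since it also contains the image of the canonical copy of $\Aut(X)$, we get $\Aut(B(X))/\langle\tau\rangle\cong\Aut(X)$, and because $\tau$ is central of order $2$ and splits (via the canonical section $\alpha\mapsto\hat\alpha$), the extension is a direct product: $\Aut(B(X))\cong\Aut(X)\times\ZZ_2$.

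Next the converse: if $X$ is stable, then $\tau$ is central. Suppose $\Aut(B(X))\cong\Aut(X)\times\ZZ_2$. The center of $\Aut(X)\times\ZZ_2$ is $Z(\Aut(X))\times\ZZ_2$, which has even order, so $\Aut(B(X))$ has a central involution. I want to argue that $\tau$ is the (essentially unique relevant) central involution. The key observation is that $\tau$ is a strongly switching involution, and a central involution of $\Aut(B(X))$ that swaps the two colour classes with no fixed edge must coincide with $\tau$: indeed, if $\sigma$ is central then $\sigma\tau=\tau\sigma$, so $\sigma$ maps each fibre $\{x\}\times\ZZ_2$ to another fibre, hence $\sigma$ descends to an automorphism $\bar\sigma$ of $X$; since $\sigma$ is an involution swapping colour classes, on the quotient $\bar\sigma$ is forced (by connectivity and the no-fixed-edge condition propagating, exactly as in Lemma~\ref{lem:strongly switching involution}) to be trivial, whence $\sigma\in\langle\tau\rangle$ and $\sigma=\tau$. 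So it suffices to produce a central involution of $\Aut(B(X))$ lying in the kernel of $\pi$; but under the isomorphism $\Aut(B(X))\cong\Aut(X)\times\ZZ_2$, the $\ZZ_2$ factor is central, generated by an involution $\sigma$, and one checks $\sigma$ must act as a colour-swapping fixed-edge-free involution (if it preserved colour classes it would lie in the copy of $\Aut(X)$, contradicting the direct product structure together with $X$ being irreducible and non-bipartite, so $X$ has no nontrivial automorphism acting trivially on the quotient other than through the $\ZZ_2$), and then the previous sentence forces $\sigma=\tau$, so $\tau$ is central.

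The main obstacle I anticipate is the converse direction, specifically pinning down \emph{which} involution of $\Aut(B(X))$ corresponds to the distinguished $\ZZ_2$ factor and showing it must be $\tau$ rather than some other central involution (for instance a central involution of $\Aut(X)$ lifted to $B(X)$, which a priori also generates a $\ZZ_2$ direct factor of the abstract group). Resolving this requires using that $X$ is irreducible and non-bipartite in an essential way — non-bipartiteness guarantees $B(X)$ is connected so the colour-class structure is canonical and $\tau$ is the unique strongly switching involution commuting with all of $\Aut(X)\times\ZZ_2$, while irreducibility rules out ``fake'' automorphisms of $B(X)$ that fix the quotient. I would handle this by comparing the two natural complements: the abstract decomposition $\Aut(B(X))\cong\Aut(X)\times\ZZ_2$ gives one $\ZZ_2$, and the geometric picture gives $\langle\tau\rangle$; both are normal of order $2$ in a group with the same quotient $\Aut(X)$, and a short argument with the kernel of $\pi$ shows they must agree, yielding that $\tau$ is central. (This is also where invoking Lemma~\ref{lem:strongly switching involution} cleanly is most useful, to identify $\tau$ intrinsically.)
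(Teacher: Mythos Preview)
Your argument for the direction ``$\tau$ central $\Rightarrow$ $X$ stable'' is correct and is essentially the paper's proof recast in the language of the quotient homomorphism $\pi\colon \Aut(B(X))\to\Aut(X)$; the paper just computes explicitly that every colour-preserving $\alpha$ equals some $\hat g$, which is the same content.

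Where you go wrong is the direction ``$X$ stable $\Rightarrow$ $\tau$ central'', which you treat as the hard one. It is in fact a one-line observation, and your longer argument has a genuine gap. The point you are missing is that the explicit subgroup $\{\hat\alpha\tau^j : \alpha\in\Aut(X),\ j\in\ZZ_2\}$ always sits inside $\Aut(B(X))$ and has order $2|\Aut(X)|$. If $X$ is stable then $|\Aut(B(X))|=|\Aut(X)\times\ZZ_2|=2|\Aut(X)|$, so this subgroup is all of $\Aut(B(X))$; and in this concrete group $\tau$ visibly commutes with every $\hat\alpha$ and with itself, hence is central. That is exactly the paper's proof of this direction.

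Your attempted route---take the generator $\sigma$ of the abstract $\ZZ_2$ factor and argue $\sigma=\tau$---does not go through as written. You assert that $\sigma$ must swap the colour classes because otherwise ``it would lie in the copy of $\Aut(X)$, contradicting the direct product structure''; but an abstract isomorphism $\Aut(B(X))\cong\Aut(X)\times\ZZ_2$ need not send the colour-preserving index-$2$ subgroup to $\Aut(X)\times\{0\}$. If $\Aut(X)$ itself has an index-$2$ subgroup $H$, the abstract isomorphism could in principle match the colour-preserving automorphisms with $H\times\ZZ_2$, in which case $\sigma$ would be colour-preserving. You also appeal to irreducibility of $X$, which is not among the hypotheses of the lemma. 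All of this is avoided by the cardinality argument above.
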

\begin{proof}
Suppose first that $X$ is stable. Then $\Aut(B(X))\cong \Aut(X)\times \langle \tau \rangle$, hence $\tau$ is central in $\Aut(B(X))$. Suppose now that $\tau$ is central in $\Aut(B(X))$. Let $\alpha\in \Aut(B(X))$ be arbitrary. Suppose first that $\alpha$ fixes the color classes of $B(X)$. Let $g$ be the permutation of $V(X)$ such that $\alpha(x,0)=(g(x),0)$. Since $\tau$ commutes with $\alpha$ it follows that $\alpha(x,1)=\alpha (\tau (x,0))=\tau (\alpha(x,0))=(g(x),1)$. Let $\{x,y\}\in E(X)$. Then $\{(x,0),(y,1)\}\in E(B(X))$, and since $\alpha \in \Aut(B(X))$ it follows that $\{(g(x),0),(g(y),1)\}\in E(B(X))$. By definition of the canonical double cover, it  follows that $\{g(x),g(y)\}\in E(X)$. It follows that $g\in \Aut(X)$. Hence, $\alpha$ is induced by the automorphism of $X$, that is $\alpha\in \Aut(X)\times \ZZ_2$. If $\alpha$ permutes the color classes of $B(X)$, then we apply the above arguments for $\alpha \tau$ which fixes the color classes, and conclude that $\alpha \tau \in \Aut(X)\times \ZZ_2$, hence $\alpha\in \Aut(X)\times \ZZ_2$. This finishes the proof.
\end{proof}

Suppose that $X=Cay(\ZZ_k,S)$ is a circulant of odd order $k$. Then it is straightforward to verify that  $B(X)$ is a Cayley graph on $\ZZ_{k}\times \ZZ_2$, with connection set $S\times \{1\}$. Since the map $\alpha: \ZZ_k \times \ZZ_2 \to \ZZ_{2k}$ defined by $\alpha(x,i)=2x+ki$ is an isomorphism, it follows that $B(X)\cong \Cay(\ZZ_{2k},k+2S)$ (where elements of $S$ are now considered as elements of $\ZZ_{2k}$), hence $B(X)$ is a circulant graph of order $2k$. Observe that the mapping $k_L:\ZZ_{2k}\to \ZZ_{2k}$ defined by $k_L(x)=x+k$ is the automorphism of $B(X)$ that corresponds to the map $\tau$ from Lemma~\ref{lem:tau central}. Hence, for circulants of odd order we have the following result.

\begin{lemma}\label{lem:central}
Let $X=Cay(\ZZ_k,S)$ be a connected circulant of odd order $k$. Then $X$ is stable if and only if the permutation $k_L$ is central in the automorphism group of $Cay(\ZZ_{2k},k+2S)= B(X)$.
\end{lemma}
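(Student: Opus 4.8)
The plan is to deduce this immediately from Lemma~\ref{lem:tau central} by transporting the whole situation along the explicit isomorphism described in the paragraph preceding the statement. First I would check that the hypotheses of Lemma~\ref{lem:tau central} are in force: a connected Cayley graph on a group $G$ is bipartite only if $G$ has a subgroup of index two, which is impossible for the cyclic group $\ZZ_k$ of odd order, so $X$ is automatically connected and non-bipartite. Hence $X$ is stable if and only if $\tau$ is central in $\Aut(B(X))$, where $\tau(x,i)=(x,i+1)$, and it then remains only to translate the condition ``$\tau$ is central in $\Aut(B(X))$'' into ``$k_L$ is central in $\Aut(\Cay(\ZZ_{2k},k+2S))$''.

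To carry this out I would work with the map $\alpha\colon\ZZ_k\times\ZZ_2\to\ZZ_{2k}$, $\alpha(x,i)=2x+ki$. Since $\gcd(k,2)=1$, the Chinese Remainder Theorem gives that $\alpha$ is an isomorphism of groups. Viewing $B(X)$ as the Cayley graph $\Cay(\ZZ_k\times\ZZ_2,\,S\times\{1\})$, one checks that $\alpha(S\times\{1\})=k+2S$; because $\alpha$ is a group isomorphism, this single identity is exactly the assertion that $\alpha$ is a graph isomorphism $B(X)\to\Cay(\ZZ_{2k},k+2S)$. Along the way one should note that $k+2S$ is a legitimate connection set: it is inverse-closed because $S$ is (using $-k\equiv k\pmod{2k}$), and it avoids $0$ because $k$ is odd whereas every element of $2S$ is even modulo $2k$.

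Finally, conjugation by $\alpha$ is a group isomorphism $\Aut(B(X))\to\Aut(\Cay(\ZZ_{2k},k+2S))$, and a one-line computation shows it sends $\tau$ to $k_L$: for $y\in\ZZ_{2k}$ write $y=2x+ki$, so that $\alpha\tau\alpha^{-1}(y)=\alpha(\tau(x,i))=\alpha(x,i+1)=2x+ki+k=y+k=k_L(y)$. An isomorphism carries the centre onto the centre, so $\tau$ is central in $\Aut(B(X))$ if and only if $k_L$ is central in $\Aut(\Cay(\ZZ_{2k},k+2S))$; combined with the reduction of the first paragraph this proves the lemma.

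I do not expect a genuine obstacle here: the statement is essentially a restatement of Lemma~\ref{lem:tau central} in the language of circulants, and the proof is a matter of unwinding definitions. The odd-order hypothesis enters in exactly two routine places — making $\alpha$ a bijection and guaranteeing that $X$ is non-bipartite — and everything else is formal.
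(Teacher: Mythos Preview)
Your proposal is correct and follows exactly the route the paper takes: the paper's ``proof'' is the paragraph immediately preceding the lemma, which introduces the isomorphism $\alpha(x,i)=2x+ki$, identifies $B(X)$ with $\Cay(\ZZ_{2k},k+2S)$, and observes that $k_L$ corresponds to $\tau$, reducing everything to Lemma~\ref{lem:tau central}. Your write-up is simply a more explicit version of that same argument, with the non-bipartiteness check and the verification that $k+2S$ is a valid connection set spelled out.
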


We will now give another characterization of stable vertex-transitive graphs.

\begin{lemma}
Let $X$ be a connected non-bipartite vertex-transitive graph. Let $B(X)$ be the canonical double cover of $X$, and let $A=Aut(B(X))$. Then $X$ is stable if and only if $A_{(v,0)}=A_{(v,1)}$ for some $v\in V(X)$.
\end{lemma}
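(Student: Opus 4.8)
\emph{Forward implication.} This needs nothing new: if $X$ is stable then $\tau$ is central in $A$ by Lemma~\ref{lem:tau central}, so $A_{(v,1)}=A_{\tau(v,0)}=\tau A_{(v,0)}\tau^{-1}=A_{(v,0)}$ for every $v\in V(X)$, and in particular for some $v$.

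\emph{Converse — reduction to centrality of $\tau$.} Suppose $A_{(v_0,0)}=A_{(v_0,1)}$ for some $v_0$. Since $\Aut(X)\le A$ acts transitively on the colour class $V(X)\times\{0\}$, conjugating by an $h\in\Aut(X)$ with $h(v_0)=v$ gives $A_{(v,0)}=A_{(v,1)}$ for \emph{every} $v\in V(X)$; equivalently $A_{\tau(u)}=A_u$ for all $u\in V(B(X))$, i.e.\ $\tau$ normalizes every vertex stabilizer of $B(X)$. By Lemma~\ref{lem:tau central} it then suffices to prove $\tau\in Z(A)$, and the plan is to derive this from the assertion that the partition $\mathcal P$ of $V(B(X))$ into the $\langle\tau\rangle$-orbits $\{(v,0),(v,1)\}$ is $A$-invariant. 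Indeed, once $\mathcal P$ is $A$-invariant, for each $\alpha\in A$ the involution $\alpha\tau\alpha^{-1}$ is fixed-point-free with orbit set $\{\alpha(P):P\in\mathcal P\}=\mathcal P$; as a fixed-point-free involution is determined by its orbit partition into pairs (it transposes the two vertices of each block), $\alpha\tau\alpha^{-1}=\tau$, so $\tau$ is central.

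\emph{The crux: $A$-invariance of $\mathcal P$.} Let $\mathcal B$ be the partition of $V(B(X))$ into the classes $F(u)=\{w:A_w=A_u\}$ of the relation ``same stabilizer''; since $A$ conjugates stabilizers, $\mathcal B$ is an $A$-invariant partition, and the normalization property above says exactly that each $\mathcal P$-block lies inside one $F(u)$, so $\mathcal P$ refines $\mathcal B$. Moreover, for any $\alpha\in A$ and $P=\{(v,0),(v,1)\}$ one has $A_{\alpha(v,0)}=\alpha A_{(v,0)}\alpha^{-1}=\alpha A_{(v,1)}\alpha^{-1}=A_{\alpha(v,1)}$, so $\alpha(P)$ is again contained in a single $\mathcal B$-block. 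Hence the only way $\mathcal P$ could fail to be $A$-invariant is that some $\alpha$ carries a block $\{(v,0),(v,1)\}$ onto a ``twisted'' transversal $\{(a,0),(b,1)\}$ of some $F(u)$ with $a\ne b$; ruling this out is, I expect, the heart of the proof. I would attack it through the action of $A_{F(u)}=N_A(A_u)$ on the block $F(u)$: since all vertices of $F(u)$ have the same stabilizer $A_u$, the quotient $A_{F(u)}/A_u$ acts \emph{regularly} on $F(u)$, and $\tau$ induces there an involution whose orbits are exactly the $\mathcal P$-blocks inside $F(u)$; one must then show that the isomorphisms $A_{F(u)}/A_u\to A_{F(\alpha u)}/A_{\alpha u}$ coming from conjugation by elements $\alpha\in A$ carry this distinguished involution to its counterpart. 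The available leverage is the bipartite structure of $B(X)$ — every edge joins the two colour classes, so no automorphism can send a $\mathcal P$-block (which meets both colour classes) to a pair lying inside one colour class — together with the connectedness of $B(X)$; one may also reduce to the case that $X$, and hence $B(X)$, is irreducible, since if $X\cong Y\wr\overline{K_d}$ with $d\ge 2$ then the wreath-product description of $B(X)$ shows $A_{(v,0)}\ne A_{(v,1)}$, so the hypothesis already fails. Granting $A$-invariance of $\mathcal P$, the second paragraph finishes the argument and Lemma~\ref{lem:tau central} gives that $X$ is stable.
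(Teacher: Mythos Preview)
Your forward implication and the reduction to ``$A_{(v,0)}=A_{(v,1)}$ for all $v$'' are correct and match the paper. The gap is exactly where you flag it: you never prove the $A$-invariance of $\mathcal P$, and the machinery you propose (analysing $N_A(A_u)/A_u$ on $F(u)$, reducing to irreducible $X$, invoking the bipartition) is both incomplete and unnecessary. In particular, the observation that $\alpha(P)$ lies in a single $\mathcal B$-block does not by itself rule out the ``twisted'' image $\{(a,0),(b,1)\}$ with $a\ne b$, and nothing in your sketch actually excludes it.

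The paper's argument closes this gap in two lines by reusing the vertex-transitivity of $X$ a second time. Assume, as you may, that $\alpha$ preserves the colour classes, and fix $w$ with $\alpha(w,0)=(u,0)$. Choose $h\in\Aut(X)$ with $h(u)=w$ and let $\gamma(x,i)=(h(x),i)$. Then $\gamma\alpha\in A_{(w,0)}=A_{(w,1)}$, so $\gamma\alpha(w,1)=(w,1)$, whence $\alpha(w,1)=\gamma^{-1}(w,1)=(u,1)$. Thus $\alpha\tau(w,0)=\tau\alpha(w,0)$ for every $w$, so $\tau$ is central and Lemma~\ref{lem:tau central} finishes. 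The point you missed is that the same trick (conjugate by something coming from $\Aut(X)$) that spreads the hypothesis from one $v$ to all $v$ also lets you compare $\alpha(w,0)$ and $\alpha(w,1)$ directly, without any detour through block systems.
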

\begin{proof}
Suppose first that $X$ is stable. Then by Lemma~\ref{lem:tau central} it follows that the map $\tau$ is central in $A$. If $\varphi\in A_{(v,0)}$ then we have $\varphi(v,1)=\varphi(\tau(v,0))=\tau(\varphi(v,0))=(v,1)$. This shows that $A_{(v,0)}\leq A_{(v,1)}$. Since $B(X)$ is vertex-transitive, all stabilizers are of the same order, hence $A_{(v,0)}=A_{(v,1)}$.

Suppose now that $A_{(v,0)}=A_{(v,1)}$ for some $v\in V(X)$. Let $w\in V(X)$ be arbitrary, let $g$ be an automorphism of $X$ that maps $v$ into $w$, and let $\beta$ be the automorphism of $B(X)$ defined by $\beta(x,i)=(g(x),i)$. Then $A_{(w,0)}=\beta A_{(v,0)} \beta^{-1}=\beta A_{(v,1)} \beta^{-1}=A_{(w,1)}$. 

Let $\alpha$ be an automorphism of $B(X)$. We will prove that $\alpha$ and $\tau$ commute. Without loss of generality, we may assume that $\alpha$ fixes the color classes of $B(X)$. Suppose that $\alpha(w,0)=(u,0)$, for some $u\in V(X)$. Let $h$ be an automorphism of $X$ that maps $u$ to $w$, and let $\gamma$ be the automorphism of $B(X)$ defined by $\gamma(x,i)=(h(x),i)$. Observe that $\gamma \alpha\in A_{(w,0)}=A_{(w,1)}$, hence $\gamma(\alpha(w,1))=(w,1)$. It follows that $\alpha(w,1)=\gamma^{-1}(w,1)=(u,1)$. It is now straightforward to conclude that $\alpha\tau (w,0)=\tau \alpha (w,0)$. Since $w\in V(X)$ was arbitrary, it follows that $\alpha$ and $\tau$ commute. We conclude that $\tau$ is central in $\Aut(B(X))$. By Lemma~\ref{lem:tau central} it follows that $X$ is stable.
\end{proof}

For circulants, the above lemma implies the following.
\begin{lemma}\label{lem:equalstabilizers0m}
Let $X$ be a connected circulant of odd order $k$, let $B(X)$ be its canonical double cover and $A=Aut(B(X))$. Then $X$ is stable if and only if $A_0=A_k$.
\end{lemma}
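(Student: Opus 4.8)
The plan is to derive this as an immediate consequence of the preceding (unnamed) lemma, which already characterizes stability of a connected non-bipartite vertex-transitive graph by equality of the two point stabilizers $A_{(v,0)}$ and $A_{(v,1)}$. So the work splits into two easy pieces: (i) checking that the hypotheses of that lemma are met by a connected circulant of odd order, and (ii) transporting the condition $A_{(0,0)}=A_{(0,1)}$ into the language of the circulant $\Cay(\ZZ_{2k},k+2S)$.

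For (i) I would note that a circulant is vertex-transitive, since $\ZZ_k$ acts regularly on it by left translation. For non-bipartiteness I would argue that, $k$ being odd, every $s\in S$ generates a cyclic subgroup of $\ZZ_k$ of odd order $m\geq 3$, so that $0,s,2s,\dots,(m-1)s$ is an odd cycle in $X$; hence $X$ is non-bipartite. (If $k=1$ the statement is vacuous, so I may assume $k\geq 3$.) The previous lemma then gives: $X$ is stable if and only if $A_{(v,0)}=A_{(v,1)}$ for some $v$ — equivalently, by vertex-transitivity, for all $v$ — and I will take $v=0$.

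For (ii) I would invoke the explicit isomorphism $\alpha\colon \ZZ_k\times\ZZ_2\to\ZZ_{2k}$, $\alpha(x,i)=2x+ki$, recorded just before Lemma~\ref{lem:central}, which carries $B(X)$ onto $\Cay(\ZZ_{2k},k+2S)$ and sends $(0,0)\mapsto 0$ and $(0,1)\mapsto k$. Conjugation by $\alpha$ is a group isomorphism $\Aut(B(X))\to\Aut(\Cay(\ZZ_{2k},k+2S))$ taking $A_{(0,0)}$ to $A_0$ and $A_{(0,1)}$ to $A_k$; therefore $A_{(0,0)}=A_{(0,1)}$ if and only if $A_0=A_k$. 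Chaining this equivalence with the one from (i) completes the proof. I do not expect any genuine obstacle here: the only point that needs a sentence of justification rather than a bare citation is the non-bipartiteness check, everything else being a mechanical transport along the stated isomorphism.
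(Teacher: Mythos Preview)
Your proposal is correct and follows exactly the route the paper takes: the paper states this lemma with the single line ``For circulants, the above lemma implies the following,'' and your write-up simply spells out the two ingredients that make that implication go through (vertex-transitivity and non-bipartiteness of a connected odd-order circulant, plus the transport along the isomorphism $\alpha$ already recorded before Lemma~\ref{lem:central}). One tiny wording issue: the case $k=1$ is not literally vacuous---both sides of the biconditional hold trivially---but dismissing it and assuming $k\ge 3$ is of course harmless.
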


A Cayley graph $\Gamma=Cay(G,S)$ is said to be {\em normal} if $G_L$ is a normal subgroup of $\Aut(\Gamma)$, or equivalently if $\Aut(\Gamma)_0=\Aut(G,S)$, where $\Aut(G,S)=\{ \varphi\in \Aut(G) \mid \varphi(S)=S\}$.

\begin{lemma}\label{lem:normal are stable}
Let $G$ be an abelian group of odd order, and let $X=Cay(G,S)$ be a connected Cayley graph on $G$. If $B(X)=Cay(G\times \ZZ_2,S')$ is a normal Cayley graph then $X$ is stable.
\end{lemma}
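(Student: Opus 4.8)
The plan is to reduce to the centrality criterion of Lemma~\ref{lem:tau central} and then observe that normality forces the relevant involution to be fixed. First note that $X$ is non-bipartite: a connected vertex-transitive bipartite graph has an even number of vertices, while $|V(X)|=|G|$ is odd (equivalently, $G$ has no subgroup of index $2$). Since $X$ is also connected, Lemma~\ref{lem:tau central} applies, so it suffices to show that the automorphism $\tau$ of $B(X)$ given by $\tau(x,i)=(x,i+1)$ is central in $A=\Aut(B(X))$. Identifying $B(X)$ with $\Cay(G\times\ZZ_2,\,S\times\{1\})$ exactly as in the paragraph preceding Lemma~\ref{lem:central} (now with $G$ in place of $\ZZ_k$), the map $\tau$ becomes left translation by the element $(0,1)\in G\times\ZZ_2$; write $\tau=(0,1)_L$.

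Next I would unpack the normality hypothesis. By definition, $B(X)=\Cay(G\times\ZZ_2,S\times\{1\})$ being a normal Cayley graph means $(G\times\ZZ_2)_L\trianglelefteq A$ and $A_{(0,0)}=\Aut\!\big(G\times\ZZ_2,\,S\times\{1\}\big)$; since $(G\times\ZZ_2)_L$ is transitive with trivial intersection with the point stabilizer, this gives $A=(G\times\ZZ_2)_L\rtimes \Aut(G\times\ZZ_2,\,S\times\{1\})$. To verify that $\tau=(0,1)_L$ is central in $A$ it therefore suffices to check that it commutes with every left translation, which is immediate because $G\times\ZZ_2$ is abelian, and with every $\varphi\in\Aut(G\times\ZZ_2,\,S\times\{1\})$. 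For the latter I would invoke the standard identity $\varphi\,h_L\,\varphi^{-1}=(\varphi(h))_L$ valid for any automorphism $\varphi$ of a group; thus $\varphi\tau\varphi^{-1}=\tau$ is equivalent to $\varphi(0,1)=(0,1)$.

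The key point — and the only place the odd-order hypothesis enters — is that $(0,1)$ is the unique involution of $G\times\ZZ_2$: an element $(g,i)$ of order $2$ satisfies $2g=0$ in $G$, hence $g=0$ since $|G|$ is odd, and then $i=1$. Any group automorphism fixes an element uniquely determined by its order, so every $\varphi\in\Aut(G\times\ZZ_2)$, in particular every $\varphi\in\Aut(G\times\ZZ_2,\,S\times\{1\})$, satisfies $\varphi(0,1)=(0,1)$. Therefore $\tau$ is central in $A$, and Lemma~\ref{lem:tau central} yields that $X$ is stable.

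I do not anticipate a real obstacle: once normality is rewritten as the semidirect product decomposition of $A$, the argument collapses to the one-line remark that $G\times\ZZ_2$ has a unique involution. The only points demanding a little care are confirming that $X$ is genuinely non-bipartite (so that Lemma~\ref{lem:tau central} is applicable) and transporting $\tau$ correctly through the identification $B(X)\cong\Cay(G\times\ZZ_2,\,S\times\{1\})$ so that it becomes literally $(0,1)_L$.
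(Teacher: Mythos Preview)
Your proof is correct and follows essentially the same route as the paper: both arguments write $\Aut(B(X))$ as $(G\times\ZZ_2)_L\cdot\Aut(G\times\ZZ_2,S')$ via normality, identify $\tau$ with left translation by the unique involution of $G\times\ZZ_2$, and check centrality by observing that translations commute (abelianness) and that group automorphisms fix the unique involution. Your version is slightly more careful in verifying non-bipartiteness and in citing Lemma~\ref{lem:tau central} rather than the circulant-specific Lemma~\ref{lem:central}, but the substance is identical.
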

\begin{proof}
Since $B(X)$ is normal Cayley graph, it follows that each element of $\Aut(B(X))$ is a composition of some element of $(G\times \ZZ_2)_L$ with some element of $\Aut(G\times \ZZ_2)$. Let $\tau$ be the automorphism of $B(X)$ defined by $\tau(x,i)=(x,i+1)$. Let $t$ be the unique element of order $2$ in $G\times \ZZ_2$. Observe that $\tau=t_L$. Since $G$ is abelian, it follows that each element of $(G\times \ZZ_2)_L$ commutes with $\tau$. Let $\varphi\in \Aut(G\times \ZZ_2)$. Then $\varphi(t)=t$, since group automorphisms preserve the order of elements, and $t$ is the unique element of order $2$ in $G\times \ZZ_2$.
It follows that $(\varphi t_L)(x)=\varphi(tx)=t\varphi(x)=(t_L\varphi)(x)$. This shows that $\tau=t_L$ commutes with every element of $\Aut(G)$. The result now follows by Lemma~\ref{lem:central}.
\end{proof}

The following lemma tells that a normal circulant of even order not divisible by four has a unique regular cyclic subgroup.
\begin{lemma}\cite[Theorem 5.2.2]{XuPhD}\label{lem:normal cyclic regular}
Let $k$ be an odd positive integer, and let $X=Cay(\ZZ_{2k},S)$. Let $A=Aut(X)$ admitting a normal cyclic regular subgroup $H$. Then $H$ is the unique regular cyclic subgroup contained in $A$. 
\end{lemma}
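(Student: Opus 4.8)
The plan is to use the normality hypothesis to pin down the abstract structure of $A$ and then squeeze out any rival regular cyclic subgroup. Write $H$ for the given normal regular cyclic subgroup, so $H\cong\ZZ_{2k}$. Since $H$ is abelian and regular, $C_A(H)=H$, so conjugation embeds $A/H$ into $\Aut(H)\cong\Aut(\ZZ_{2k})\cong\ZZ_k^{*}$ (here $k$ is odd); hence $m:=|A/H|$ divides $\varphi(k)$, in particular $m<k$, and $A=H\rtimes A_0$ with the point stabilizer $A_0$ isomorphic to a subgroup of $\ZZ_k^{*}$. Now let $K$ be an arbitrary regular cyclic subgroup of $A$ of order $2k$; the goal is to show $K=H$. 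Since $H\trianglelefteq A$ and $K$ is transitive, $A=HK$, whence $A/H\cong K/(K\cap H)$, so $m\mid 2k$ and $|K\cap H|=2k/m$.

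First I would settle the $2$-part of $K$. The unique involution $t\in H$ is characteristic in $H$, hence central in $A$, so the $\langle t\rangle$-orbits form an $A$-invariant partition $\mathcal B$ of $V(X)$ into $k$ blocks of size $2$. Let $N$ be the kernel of the induced action of $A$ on $\mathcal B$. Then $N\trianglelefteq A$ is an elementary abelian $2$-group (a subgroup of $\prod_{B\in\mathcal B}\Sym(B)$) with $N\cap H=\langle t\rangle$, so $N/\langle t\rangle$ embeds into the cyclic group $A/H$, forcing $|N|\le 4$; moreover $t$ is the only element of $N$ without a fixed vertex. Since $K$ is cyclic and acts transitively on the $k$ (odd number of) blocks, and a cyclic group cannot act both transitively and faithfully on fewer points than its order, the kernel $K\cap N$ of this action has order exactly $2$; thus the unique involution $\iota$ of $K$ lies in $N$, and being fixed-point-free ($K$ is regular) it must equal $t$. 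Hence $t\in K$.

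Next I would descend to odd order. Since $t\in H\cap K$, pass to the quotient graph $\ov X:=X/\langle t\rangle$, a circulant of odd order $k$; automorphisms descend, so $\ov A:=A/N$ acts on $\ov X$ with a normal regular cyclic subgroup $\ov H\cong\ZZ_k$ (the image of $H$) and a further regular cyclic subgroup $\ov K\cong\ZZ_k$ (the image of $K$), and $\ov H=\ov K$ forces $H=K$: this is immediate when $N=\langle t\rangle$, and when $|N|=4$ it holds because in the order-$4k$ group $HN=KN=H\rtimes\langle n\rangle$ (with $n\in N\setminus\langle t\rangle$) the subgroup $H$ is the \emph{only} cyclic subgroup of order $2k$ containing $t$ --- any other meets $H$ in the order-$k$ subgroup $2\ZZ_{2k}$, which does not contain $t=k$ since $k$ is odd. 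So the whole statement reduces to its odd-order version: there $\ov A=\ov H\rtimes\ov A_0$ with $\ov A_0$ a subgroup of $\ZZ_k^{*}$, and one must rule out a generator of $\ov K$ of the form $t_v\sigma$ with $\sigma\ne 1$; analysing the order of such an element in terms of $\sigma$ and the ``norm'' operators $1+\sigma+\dots+\sigma^{i-1}$, together with the fact that the connection set of $\ov X$ generates $\ZZ_k$, one shows no such element of order $k$ exists, so $\ov K=\ov H$.

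The crux --- and the step I expect to be the real obstacle --- is precisely this odd-order analysis: ruling out a ``twisted'' regular cyclic subgroup prime by prime. It cannot be done by abstract group theory alone, because the automizer of a connected normal circulant of odd order need not have order coprime to the order (the obstruction $\gcd(\varphi(k),k)>1$ first arises at $k=21$, e.g.\ for generalized Paley circulants), so one is forced to invoke the combinatorial rigidity of connected circulants (equivalently, the structure theory of Schur rings over cyclic groups). Everything preceding it --- the reduction via normality, the centrality of $t$, the block system $\mathcal B$ and the identification $\iota=t$, and the passage from $\ov H=\ov K$ back to $H=K$ --- is routine once it is set up.
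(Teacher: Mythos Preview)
The paper does not give a proof of this lemma; it is quoted as \cite[Theorem~5.2.2]{XuPhD} and used as a black box, so there is no in-paper argument against which to compare your attempt.

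On its own terms your proposal is incomplete, and you say so yourself: the odd-order step---showing that a normal regular copy of $\ZZ_k$ in $\Aut(\Cay(\ZZ_k,\ov S))$ is the \emph{only} regular cyclic subgroup---is left as ``the crux,'' and you correctly note that it cannot be settled by pure holomorph arithmetic (your $k=21$ obstruction is genuine). But that odd-order statement is exactly the lemma itself restricted to odd moduli, so your reduction does not bypass the hard content of Xu's theorem; it only isolates it. What you have written is a plausible reduction scaffold, not a proof.

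Two smaller issues in the reduction itself. First, the assertion $A=HK$ is false as stated (take $K=H$ with $A_0\neq 1$); fortunately you never use its consequences $m\mid 2k$ and $|K\cap H|=2k/m$, so this is cosmetic. Second, the claim that $A/H$ is \emph{cyclic} is unjustified: $A/H$ embeds in $\ZZ_k^{*}$, which for odd $k$ with several prime factors has $2$-rank greater than $1$. Your bound $|N|\le 4$ is nevertheless correct, but for a different reason: since $H\trianglelefteq A$ one has $A_0\le\Aut(\ZZ_{2k})$, and the only multipliers $x\mapsto ax$ fixing every pair $\{x,x+k\}$ setwise are $a=1$ and $a=k+1$, whence $|N\cap A_0|\le 2$ and $|N|\le 4$. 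With that correction the lift ``$\ov H=\ov K\Rightarrow H=K$'' goes through as you describe.
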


The {\em wreath (lexicographic) product} $\Sigma \wr \Gamma$ of a graph $\Gamma$ by
a graph $\Sigma$ is the graph with vertex set $V(\Sigma)\times V(\Gamma)$ such that $\{(u_1,u_2),(v_1,v_2)\}$ is an edge
if and only if either $\{u_1,v_1\}\in E(\Sigma)$, or $u_1=v_1$ and $\{u_2,v_2\}\in E(\Gamma).$ 
Observe that $\Sigma \wr \Gamma$ is the graph obtained by substituting a copy of $\Gamma$ for each vertex of $\Gamma$.

The {\em deleted wreath (deleted lexicographic) product} of a graph $\Sigma$ and $\overline{K_d}$, denoted by $\Sigma \wr_d \ov{K_d}$, is the graph with vertex set $V(\Sigma)\times \ZZ_d$, such that $\{(u_1,i),(v_1,j)\}$ is an edge
if and only if $\{u_1,v_1\}\in E(\Sigma)$ and $i\neq j$.
Observe that $\Sigma \wr_d \ov{K_d}$ can be obtained from $\Sigma \wr \ov{K_d}$ by removing $d$ disjoint copies of $\Sigma$.
Observe that the canonical double cover of a graph $X$ is isomorphic to $X\wr_d \ov{K_2}$ (see \cite[Example 2.1]{QXZ}). The following result gives a characterization of all arc-transitive circulants.

\begin{lemma}
\label{lem:KovacsLi}
{\rm\cite{K03,Li05}}
Let $\Gamma$ be a connected arc-transitive circulant of order $n$. Then one of the following holds:
\begin{enumerate}
\itemsep=0pt
\item[(i)] $\Gamma \cong K_n;$
\item[(ii)] $\Gamma =\Sigma \wr \overline{K_d},$ where $n=md$, $m,d>1$ and $\Sigma$ is a connected arc-transitive circulant of order $m$;
\item[(iii)] $\Gamma=\Sigma \wr_d \overline{K_d}$ where $n=md,$ $d>3,$ $gcd(d,m)=1$ and $\Sigma$ is a
connected arc-transitive circulant of order $m$;
\item[(iv)] $\Gamma $ is a normal circulant.
\end{enumerate}
\end{lemma}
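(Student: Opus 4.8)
The plan is to prove this classification through the theory of Schur rings (S-rings) over cyclic groups, which is the natural algebraic framework for circulants. Write $\Gamma = \Cay(\ZZ_n, S)$, let $A = \Aut(\Gamma)$, and let $R = (\ZZ_n)_L$ be the regular cyclic subgroup supplied by Sabidussi's theorem. The orbits of the vertex stabilizer $A_0$ on $\ZZ_n$ are the basic sets of an S-ring $\mathcal{A} = V(A,R)$ over $\ZZ_n$, the transitivity module associated with $A$. The hypothesis that $\Gamma$ is arc-transitive means precisely that $A_0$ acts transitively on the neighbourhood $S$, so $S$ is a single basic set of $\mathcal{A}$, while connectedness of $\Gamma$ means $S$ generates $\ZZ_n$. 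The whole problem is thereby reduced to understanding which generating subsets of $\ZZ_n$ can occur as a basic set of an S-ring and then translating the algebraic structure of $\mathcal{A}$ back into a decomposition of $\Gamma$.

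Next I would invoke the structure theorem for S-rings over cyclic groups (Leung--Man, with the general cyclic case completed by Evdokimov--Ponomarenko and Muzychuk): every S-ring over $\ZZ_n$ is built, by iterating the tensor (dot) product over coprime factorizations $n = n_1 n_2$ and the generalized wreath product relative to a subgroup section $L \leq U \leq \ZZ_n$, from S-rings over groups of prime-power order that are themselves either of rank $2$ or \emph{normal} (with point stabilizer inside $\Aut$ of the group). Applying this to $\mathcal{A}$ and tracking where the distinguished basic set $S$ lands produces a short list of top-level possibilities. I would run the whole argument by induction on $n$, so that any proper quotient circulant or coprime factor may be assumed to already satisfy the theorem.

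Then I would match each possibility to a conclusion. Rank $2$ gives $S = \ZZ_n \setminus \{0\}$ and $\Gamma \cong K_n$, that is (i). If $\mathcal{A}$ is a genuine generalized wreath product relative to a subgroup $H$ of order $d > 1$, then $S$ is invariant under translation by $H$ outside $H$ itself, and the interaction of $S$ with $H$ splits into exactly two possibilities: if $H \setminus \{0\} \subseteq S$ then $\Gamma$ is the wreath product $\Sigma \wr \ov{K_d}$ over a smaller arc-transitive circulant $\Sigma$ of order $m = n/d$, giving (ii); if $S \cap H = \emptyset$ then $\Gamma$ is the deleted wreath product $\Sigma \wr_d \ov{K_d}$, giving (iii). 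In the deleted case one extracts the side conditions: the construction yields a circulant on $\ZZ_n$ only when $\gcd(d,m) = 1$, via the Chinese remainder identification $\ZZ_n \cong \ZZ_d \times \ZZ_m$, and for $d \leq 3$ the deleted wreath product either fails to be arc-transitive or already appears under (i) or (ii), which forces $d > 3$. If instead $\mathcal{A}$ is a nontrivial tensor product, the coprime factors are handled by the inductive hypothesis and recombined, the product of two normal circulants again being normal since $\Aut(\ZZ_n) = \Aut(\ZZ_{n_1}) \times \Aut(\ZZ_{n_2})$. Finally, if $\mathcal{A}$ admits neither decomposition and has rank exceeding $2$, then $A_0 \leq \Aut(\ZZ_n)$ and $R \trianglelefteq A$, so $\Gamma$ is a normal circulant, giving (iv); the primitive instances are pinned down here by Schur's theorem that cyclic groups of composite order are B-groups together with Burnside's theorem on transitive groups of prime degree.

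The main obstacle, I expect, is twofold. First, the S-ring structure theorem underpinning the argument is itself a deep result, and reducing its generalized-wreath output to the clean two-sub-case split above is delicate: the genuinely hard bookkeeping is showing that the distinguished basic set $S$ meets the subgroup $H$ in exactly one of the two stated ways and never in a mixed fashion. Second, one must correctly derive the sharp arithmetic conditions of (iii), namely $\gcd(d,m) = 1$ and $d > 3$, and verify that the recursion always terminates with $\Sigma$ again a connected arc-transitive circulant of strictly smaller order, so that the inductive hypothesis applies. An alternative, more group-theoretic route (following Li) would bypass explicit S-rings and instead analyse a transitive group with a cyclic regular subgroup directly through its block systems, using an O'Nan--Scott-type argument in the primitive case and induction on the quotient and block actions in the imprimitive case; the same two obstacles reappear there as the identification of the socle structure and the separation of the wreath from the deleted-wreath behaviour.
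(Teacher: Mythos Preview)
The paper does not prove this lemma at all: it is quoted verbatim from Kov\'acs \cite{K03} and Li \cite{Li05} and used as a black box in the proofs of Lemmas~\ref{lem:arc-transitive} and~\ref{lem:key lemma}. So there is no ``paper's own proof'' to compare against.

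That said, your sketch is a faithful high-level outline of the Kov\'acs approach via Schur rings over $\ZZ_n$, and you correctly identify Li's group-theoretic alternative. As a plan it is accurate, though what you have written is really a table of contents rather than a proof: the Leung--Man/Evdokimov--Ponomarenko structure theorem and the bookkeeping that pins the basic set $S$ against a subgroup $H$ in exactly the two ways you list are each substantial pieces of work, and you have (rightly) flagged them as the genuine obstacles without actually carrying them out. For the purposes of this paper none of that is needed---the authors simply cite the result---so your proposal goes well beyond what the paper does.
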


The following result is a direct consequence of \cite[Theorem 5.3]{DMS19}.
\begin{lemma}\label{lem:TedStefko}
Let $d\geq 3$ be an integer, let $\Sigma$ be an irreducible vertex-transitive graph whose order is not divisible by $d$, and let $\Gamma=\Sigma \wr_d \ov{K_d}$. Then $\Aut(\Gamma)\cong \Aut(\Sigma) \times S_d$.
\end{lemma}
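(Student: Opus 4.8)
The plan is to start from the trivial inclusion and reduce the statement to a single structural fact about $\Gamma=\Sigma\wr_d\ov{K_d}$, which is just the categorical product $\Sigma\times K_d$ (the condition ``$\{u,v\}\in E(\Sigma)$ and $i\ne j$'' is exactly adjacency in $\Sigma\times K_d$). For $(\varphi,\pi)\in\Aut(\Sigma)\times S_d$ the map $(u,i)\mapsto(\varphi(u),\pi(i))$ is an automorphism of $\Gamma$, and this action is faithful, so $\Aut(\Sigma)\times S_d\le\Aut(\Gamma)$. The whole content is the reverse inclusion, which I would derive from the assertion that the partition $\mathcal P=\{\,\{u\}\times\ZZ_d:u\in V(\Sigma)\,\}$ of $V(\Gamma)$ into \emph{fibres} is $\Aut(\Gamma)$-invariant. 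Granting that, $\Aut(\Gamma)$ acts on the quotient $\Gamma/\mathcal P$, which is $\Sigma$ (two fibres are adjacent iff the underlying vertices are, since $d\ge2$), giving a surjection $\Phi\colon\Aut(\Gamma)\to\Aut(\Sigma)$ (surjective because it is the identity on the embedded copy of $\Aut(\Sigma)$); a kernel element acts on each fibre by a permutation $\rho_u\in\Sym(\ZZ_d)$, and the automorphism condition forces $\rho_u=\rho_v$ whenever $\{u,v\}\in E(\Sigma)$, so by connectedness of $\Sigma$ (which I take throughout, as is implicit in the cited source) all the $\rho_u$ coincide and $\ker\Phi\cong S_d$. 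Comparing orders, $|\Aut(\Gamma)|=|\Aut(\Sigma)|\cdot d!=|\Aut(\Sigma)\times S_d|$, and since $\Aut(\Sigma)\times S_d\le\Aut(\Gamma)$ with its two standard subgroups centralising one another, $\Aut(\Gamma)=\Aut(\Sigma)\times S_d$.

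The real work is fibre-invariance, and here the hypotheses $d\ge3$ and $d\nmid|V(\Sigma)|$ enter. If $\Sigma$ is non-bipartite and connected, then $\Gamma=\Sigma\times K_d$ is connected and non-bipartite, hence has a unique prime factorisation with respect to the direct product; $K_d$ is prime (no nontrivial direct factorisation exists for $d\ge2$) and has $d$ vertices, and since $d\nmid|V(\Sigma)|$ no prime factor of $\Sigma$ has $d$ vertices, so $K_d$ is isomorphic to no prime factor of $\Sigma$. Then every automorphism of $\Gamma$ respects the decomposition $\Gamma\cong(\text{prime factors of }\Sigma)\times K_d$, acting as $\Aut(K_d)=S_d$ on the $K_d$-coordinate and as $\Aut(\Sigma)$ on the rest; in particular it preserves $\mathcal P$. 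The delicate case is $\Sigma$ bipartite, where prime factorisation with respect to the direct product need not be unique; this is exactly the setting of \cite[Theorem~5.3]{DMS19}, of which the Lemma is a direct consequence. That theorem computes $\Aut(\Sigma\wr_d\ov{K_d})$ in general, and the present hypotheses --- $d\ge3$, $\Sigma$ irreducible, $d\nmid|V(\Sigma)|$ --- are precisely what rules out the exceptional possibilities there (reducible $\Sigma$, small $d$, $d\mid|V(\Sigma)|$), so the answer collapses to $\Aut(\Sigma)\times S_d$.

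I expect the bipartite sub-case of fibre-invariance to be the main obstacle: one has to forbid an automorphism of $\Sigma\times K_d$ from interchanging (parts of) the fibre partition $\mathcal P$ with (parts of) the \emph{layer} partition $\{\,V(\Sigma)\times\{i\}:i\in\ZZ_d\,\}$. The hypothesis $d\nmid|V(\Sigma)|$ blocks a global swap of the two partitions, since their blocks then have different sizes; excluding partial mixing requires the irreducibility of $\Sigma$ together with the finer direct-product analysis of \cite[Theorem~5.3]{DMS19}. For the write-up I would simply invoke that theorem, as the entire content of the Lemma lies in it; a self-contained argument would reprove the bipartite case of fibre-invariance directly.
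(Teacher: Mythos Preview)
Your proposal is correct and, at its core, takes the same approach as the paper: both reduce the statement to \cite[Theorem~5.3]{DMS19} and observe that the hypothesis $d\nmid|V(\Sigma)|$ kills the relevant exceptional condition there. The paper's proof is precisely that two-line invocation, without the surrounding direct-product and fibre-invariance discussion you sketch (which is sound but not needed once the theorem is cited).
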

\begin{proof}
Since $\Sigma$ is vertex-transitive, and $\Aut(\Gamma)$ contains a subgroup isomorphic to $\Aut(\Sigma)\times S_d$, it follows that $\Gamma$ is vertex-transitive. Since $\Sigma$ is reducible, by \cite[Theorem 5.3]{DMS19} it follows that $\Aut(\Gamma)\not \cong \Aut(\Sigma) \times S_d$ if and only if the conditions $(i)-(iii)$ of \cite[Theorem 5.3]{DMS19} hold. However, since $|V(\Sigma)|$ is not divisible by $d$, it follows that condition $(ii)$ of \cite[Theorem 5.3]{DMS19} doesn't hold. We conclude that $\Aut(\Gamma)\cong \Aut(\Sigma) \times S_d$.
\end{proof}

\begin{lemma}\cite[Lemma 2.3]{QXZ}\label{lem:worhtydoublecover}
The canonical double cover $B(X)$ of a graph $X$ is reducible if and only if $X$ is reducible.
\end{lemma}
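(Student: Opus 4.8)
The plan is to reduce the whole statement to one elementary observation about neighbourhoods in the canonical double cover, and then argue coordinate-wise. First I would record the identity $N_{B(X)}((x,i)) = N_X(x) \times \{i+1\}$ for every $x \in V(X)$ and $i \in \ZZ_2$, which is immediate from the definition $E(B(X)) = \{\{(x,0),(y,1)\} : \{x,y\} \in E(X)\}$. Two consequences I would highlight and use repeatedly: the neighbourhood of any vertex of $B(X)$ lies entirely in one colour class, and it is empty exactly when $x$ is isolated in $X$.

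For the ``if'' direction I would take distinct $u, v \in V(X)$ with $N_X(u) = N_X(v)$ and pass to the lifts $(u,0), (v,0)$: these are distinct vertices of $B(X)$ with common neighbourhood $N_X(u) \times \{1\}$, so $B(X)$ is reducible.

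For the ``only if'' direction I would start from distinct vertices $(u,i), (v,j)$ of $B(X)$ having the same neighbourhood in $B(X)$. If that common neighbourhood is non-empty, then comparing second coordinates of its elements forces $i = j$ (so in particular $u \ne v$), and comparing first coordinates then yields $N_X(u) = N_X(v)$, whence $X$ is reducible. If the common neighbourhood is empty, then $u$ and $v$ are isolated vertices of $X$; if $u \ne v$ we again conclude $X$ is reducible, while the case $u = v$ would require $X$ to contain a second isolated vertex in order for $X$ itself to be reducible. This last point is the only place requiring any care, and it is handled by reading the lemma with the standing assumption that $X$ has no isolated vertices (automatic for the connected graphs to which the lemma is applied). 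Apart from this minor caveat there is no real obstacle: the argument is a direct unwinding of the definition of $B(X)$ together with the bookkeeping of the two colour classes.
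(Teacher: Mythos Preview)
Your argument is correct. The paper does not supply its own proof of this lemma: it is quoted verbatim from \cite[Lemma~2.3]{QXZ}, so there is no in-paper argument to compare against. Your direct computation via the identity $N_{B(X)}((x,i)) = N_X(x)\times\{i+1\}$ is exactly the natural proof, and you have correctly flagged the one genuine subtlety: if $X$ has a single isolated vertex $u$ (and is otherwise irreducible), then $(u,0)$ and $(u,1)$ witness that $B(X)$ is reducible while $X$ is not, so the lemma as stated literally fails in that degenerate case. Your observation that the paper only ever invokes the lemma for connected $X$ (indeed for connected non-bipartite $X$, where $B(X)$ is itself connected) disposes of this cleanly.
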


The following result can be extracted from the proof of \cite[Proposition 2.1]{NW72}. However, as the statement of \cite[Proposition 2.1]{NW72} is a bit weaker, we include the proof here for the sake of completeness.

\begin{lemma}\label{lem:stabilizer of a set fixes a subgroup}
Let $X=Cay(G,S)$, $A=Aut(X)$ and let $K\subset S$ such that $\varphi(K)=K$ for every $\varphi \in A_1$. Then $\varphi(\langle K \rangle)=\langle K \rangle$ for every $\varphi \in A_1$. Moreover, if $K$ is inverse closed, then $\varphi$ induces an automorphism of $Cay(\langle K \rangle,K)$.
\end{lemma}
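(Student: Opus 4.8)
The plan is to first establish a stronger, \emph{local} statement: for every $\varphi\in A_1$, every $g\in G$ and every $k\in K$ one has $\varphi(g)^{-1}\varphi(gk)\in K$. In words, $\varphi$ sends every edge of the form $\{g,gk\}$ with $k\in K$ to another edge of the same form. The reason the hypothesis $\varphi(K)=K$ at the single vertex $1$ suffices to control edges at \emph{every} vertex is the standard trick of composing $\varphi$ with left translations. Fix $g\in G$ and set $\psi_g:=(\varphi(g)^{-1})_L\circ\varphi\circ g_L$. Each factor is an automorphism of $X$ (left translations are automorphisms of any Cayley graph, and $\varphi\in A$), so $\psi_g\in A$; moreover $\psi_g(1)=\varphi(g)^{-1}\varphi(g)=1$, hence $\psi_g\in A_1$ and therefore $\psi_g(K)=K$. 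Computing $\psi_g(k)=\varphi(g)^{-1}\varphi(gk)$ for $k\in K$ then gives exactly $\varphi(g)^{-1}\varphi(gk)\in K$.

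With the local statement in hand, $\varphi(\langle K\rangle)=\langle K\rangle$ follows by induction on word length. Since $G$ is finite, every element of the subgroup $\langle K\rangle$ is a product $k_1k_2\cdots k_n$ with all $k_i\in K$ (no inverses are needed, as $k^{-1}$ is a power of $k$). Now $\varphi(1)=1\in\langle K\rangle$, and if $\varphi(k_1\cdots k_m)\in\langle K\rangle$ then applying the local statement with $g=k_1\cdots k_m$ yields $\varphi(k_1\cdots k_{m+1})\in\varphi(g)K\subseteq\langle K\rangle$. Thus $\varphi(\langle K\rangle)\subseteq\langle K\rangle$ for every $\varphi\in A_1$; applying this to $\varphi^{-1}\in A_1$ gives the reverse inclusion, so $\varphi(\langle K\rangle)=\langle K\rangle$.

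For the final claim, assume $K$ is inverse closed, so that $Cay(\langle K\rangle,K)$ is a well-defined simple graph (note $1\notin K$ since $K\subseteq S$). By the previous paragraph $\varphi$ restricts to a bijection of $\langle K\rangle$, and two vertices $x,xk$ with $x\in\langle K\rangle$, $k\in K$ are adjacent in $Cay(\langle K\rangle,K)$ precisely when $x^{-1}(xk)=k\in K$; the local statement shows $\varphi(x)^{-1}\varphi(xk)\in K$, so adjacency is preserved. Applying the same to $\varphi^{-1}$, we conclude that the restriction of $\varphi$ is an automorphism of $Cay(\langle K\rangle,K)$.

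I do not expect a serious obstacle here. The only point requiring care is that the naive induction ``$\varphi(\langle K\rangle)\subseteq\langle K\rangle$'' does not close on its own: knowing $\varphi(K)=K$ only at the vertex $1$ does not by itself tell us which edges incident to a general vertex $g$ are ``$K$-edges''. This is precisely what the translation-conjugation $\psi_g=(\varphi(g)^{-1})_L\circ\varphi\circ g_L$ repairs, by transporting the hypothesis from $1$ to $g$; once that is observed, the rest is a routine induction together with the finiteness remark that $\langle K\rangle$ consists of positive words in $K$.
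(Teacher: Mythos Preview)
Your proof is correct and follows essentially the same approach as the paper: both arguments hinge on the conjugation trick $\psi_g=(\varphi(g)^{-1})_L\circ\varphi\circ g_L\in A_1$ to transport the hypothesis $\varphi(K)=K$ from the identity to an arbitrary vertex, and then close up under products by induction. The paper organizes the induction as $\varphi(K^t)=K^t$ for all $t$, while you phrase it via word length using the pointwise statement $\varphi(g)^{-1}\varphi(gk)\in K$; these are equivalent reformulations of the same idea.
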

\begin{proof}
Let $x\in G$ and $\omega\in A_x$ be arbitrary, and let $M$ be any subset of $G$ such that $\varphi(M)=M$ for every $\varphi \in A_1$. Observe that $(x_L)^{-1}\omega x_L\in A_1$. By the assumption on $A_1$ and $M$ it follows that  $(x_L)^{-1}\omega x_L (M)=M$. Therefore, $\omega(xM)=xM$ for every $x\in G$ and every $\omega\in A_x$.

We claim that $\varphi(K^t)=K^t$ for every positive integer $t$. The proof is by induction on $t$. If $t=1$, the claim follows from the hypothesis. Suppose that $\varphi(K^t)=K^t$.
Let $\varphi \in A_1$ and let $k_1\in K$ be arbitrary. Observe that $(k_1k_2^{-1})_L\varphi \in A_{k_1}$. Applying the first part of the proof with $M=K^t$ and $x=k_1$, it follows that $(k_1k_2^{-1})_L\varphi (k_1K^t)=k_1K^t$. This implies that $\varphi(k_1K^t)=k_2K^t=\varphi(k_1)K^t\subseteq K^{t+1}$. Since this is true for every $k_1\in K$, it follows that $\varphi(K^{t+1})=K^{t+1}$. 

Since $\langle K \rangle=K\cup K^2 \cup \ldots \cup K^t$ for some positive integer $t$, it follows that $\varphi(\langle K \rangle)=\langle K \rangle$. Moreover, let $e$ be an arbitrary edge of $Cay(\langle K \rangle,K)$. Then $e=\{k_1,k_1k_2\}$ for some $k_1,k_2\in K$. As we proved that $\varphi(k_1K^t)=\varphi(k_1)K^t$ for every positive integer $t$, it follows that $\varphi(k_1K)=\varphi(k_1)K$. Since $\varphi(k_1k_2)\in \varphi(k_1K)=\varphi(k_1)K$ we conclude that $\varphi(k_1k_2)=\varphi(k_1)k$ for some $k\in K$. This shows that $\varphi(e)$ is an edge of $Cay(\langle K \rangle,K)$, hence $\varphi$ induces an automorphism of $Cay(\langle K \rangle,K)$.
\end{proof}

\section{Main result}

The following lemma gives a partial generalization of \cite[Theorem 1.6]{QXZ}, where it is proved that there is no arc-transitive nontrivally unstable circulant. It is easy to see that $B(X)$ is arc-transitive if $X$ is arc-transitive. 
\begin{lemma} \label{lem:arc-transitive}
Let $X$ be a nontrivialy unstable circulant of odd order $m$. Then $B(X)$ is not arc-transitive. 
\end{lemma}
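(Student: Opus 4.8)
The plan is to argue by contradiction: assume $B(X)$ is arc-transitive. Since $X$ is connected and non-bipartite, $B(X)$ is connected; being a canonical double cover, $B(X)$ is bipartite; and since $X$ is irreducible, $B(X)$ is irreducible by Lemma~\ref{lem:worhtydoublecover}. As $X$ has odd order $m$, $B(X)\cong\Cay(\ZZ_{2m},m+2S)$ is a circulant of order $2m$. Thus $B(X)$ is a connected, bipartite, irreducible, arc-transitive circulant of order $2m$, and I apply Lemma~\ref{lem:KovacsLi} to it. Case (i) cannot occur, as $K_{2m}$ is non-bipartite while $m\ge 3$ (a connected non-bipartite graph has at least three vertices). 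Case (ii) cannot occur, since $\Sigma\wr\ov{K_d}$ with $d>1$ is reducible (two vertices $(w,0),(w,1)$ of a common fibre have equal neighbourhoods). Case (iv) forces $X$ to be stable by Lemma~\ref{lem:normal are stable} (a normal circulant of order $2m$ is, via $\ZZ_{2m}\cong\ZZ_m\times\ZZ_2$, a normal Cayley graph on $\ZZ_m\times\ZZ_2$), contradicting instability of $X$. Hence case (iii) holds: $B(X)\cong\Sigma\wr_d\ov{K_d}$ with $d>3$, $\gcd(d,u)=1$, $2m=ud$, and $\Sigma$ a connected arc-transitive circulant of order $u$.

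To treat case (iii), I would first collect structural information about $\Sigma$. It is irreducible: if $N_\Sigma(a)=N_\Sigma(b)$ with $a\ne b$, then $(a,0)$ and $(b,0)$ have equal neighbourhoods in $\Sigma\wr_d\ov{K_d}=B(X)$, contradicting irreducibility of $B(X)$. It is bipartite: a connected non-bipartite $\Sigma$ has an odd cycle, which lifts (using a proper $d$-colouring of that cycle, which exists since $d\ge3$) to an odd cycle of $\Sigma\wr_d\ov{K_d}=B(X)$, contradicting bipartiteness. A connected bipartite vertex-transitive graph has even order, so $u=2u'$; then $m$ odd and $\gcd(d,u)=1$ force $d$ odd. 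By Lemma~\ref{lem:TedStefko}, $\Aut(B(X))\cong\Aut(\Sigma)\times S_d$. Next I locate the deck involution $\tau$ of $B(X)\to X$, which is the translation $x\mapsto x+m$ of $\Cay(\ZZ_{2m},m+2S)$ (Lemma~\ref{lem:central}); it lies in, hence commutes with, the regular cyclic group $(\ZZ_{2m})_L$. Write $\tau=(\phi,\rho)\in\Aut(\Sigma)\times S_d$. Projecting the commuting relation to the $S_d$-factor, $\rho$ lies in the image of $(\ZZ_{2m})_L$, a transitive cyclic subgroup of $S_d$, hence isomorphic to $\ZZ_d$; as $d$ is odd and $\rho^2=1$, we get $\rho=\mathrm{id}$. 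Projecting to the $\Aut(\Sigma)$-factor, $\phi$ commutes with the image $H$ of $(\ZZ_{2m})_L$, a regular abelian cyclic subgroup of $\Aut(\Sigma)$ of order $u$; since the centralizer of a regular abelian group in the full symmetric group equals that group, $\phi\in H$, and as $\phi^2=1\ne\phi$, $\phi$ is the unique involution of $H$. Identifying $V(\Sigma)$ with $\ZZ_u$ via $H$, we may take $\Sigma=\Cay(\ZZ_u,T)$ and $\tau=((u')_L,\mathrm{id})$ with $u'=u/2$.

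The crux is to show that $(u')_L$ is central in $\Aut(\Sigma)$; granting this, $\tau=((u')_L,\mathrm{id})$ is central in $\Aut(\Sigma)\times S_d=\Aut(B(X))$, so $X$ is stable by Lemma~\ref{lem:tau central}, contradicting instability and finishing the proof. I would prove by strong induction on the order the following statement: if $\Sigma=\Cay(\ZZ_\ell,T)$ is a connected irreducible bipartite arc-transitive circulant with $\ell$ even, then $(\ell/2)_L$ is central in $\Aut(\Sigma)$. Apply Lemma~\ref{lem:KovacsLi} to $\Sigma$. If $\Sigma=K_\ell$, then bipartiteness gives $\ell=2$ and $\Aut(\Sigma)$ is abelian. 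The wreath case is excluded by irreducibility of $\Sigma$. If $\Sigma$ is a normal circulant, then $(\ell/2)_L$ commutes with $(\ZZ_\ell)_L$ and with each $\varphi\in\Aut(\ZZ_\ell,T)$, because $\varphi$ is multiplication by a unit $c$ of $\ZZ_\ell$, and $c$ is odd (as $2\mid\ell$), whence $c\cdot(\ell/2)\equiv\ell/2\pmod\ell$ and $\varphi(\ell/2)_L=(\ell/2)_L\varphi$; since $\Aut(\Sigma)=(\ZZ_\ell)_L\cdot\Aut(\ZZ_\ell,T)$, we conclude. Finally, if $\Sigma\cong\Sigma_1\wr_{d_1}\ov{K_{d_1}}$ as in case (iii), then $\Sigma_1$ is again connected, irreducible and bipartite, of even order $\ell_1<\ell$, $d_1$ is odd, $\Aut(\Sigma)\cong\Aut(\Sigma_1)\times S_{d_1}$ by Lemma~\ref{lem:TedStefko}, and the same projection argument shows $(\ell/2)_L$ corresponds to $((\ell_1/2)_L,\mathrm{id})$, central by the induction hypothesis. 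This completes the induction.

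The main obstacle is exactly this last point: case (iii) of Lemma~\ref{lem:KovacsLi} does not by itself rule out instability, and one must control $\Aut(\Sigma)$ enough to see that the half-translation is central. The recursion via Lemma~\ref{lem:KovacsLi} and Lemma~\ref{lem:TedStefko}, stripping off deleted-wreath factors until one reaches $K_2$ or a normal circulant — where the elementary fact that units modulo an even integer are odd closes the argument — is the key device. The remaining work (verifying that irreducibility, bipartiteness and arc-transitivity pass to the deleted-wreath factor, and that the regular cyclic subgroup projects as asserted) is routine but should be done with care.
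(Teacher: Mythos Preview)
Your proof is correct and follows essentially the same strategy as the paper's: both argue by contradiction, apply the Kov\'acs--Li classification (Lemma~\ref{lem:KovacsLi}) to $B(X)$, dispose of cases (i), (ii), (iv) quickly, and in case (iii) peel off deleted-wreath factors recursively via Lemma~\ref{lem:TedStefko} until a normal-circulant base is reached, where one shows the half-translation is central and concludes by Lemma~\ref{lem:tau central}. The one substantive difference is in that base case: the paper invokes Lemma~\ref{lem:normal cyclic regular} (uniqueness of the regular cyclic subgroup, from Xu's thesis) to identify the projected cyclic group as the normal one and then uses that a normal subgroup of order~$2$ is central, whereas you argue directly that $\Aut(\ZZ_\ell,T)$ consists of multiplications by units, which are odd since $\ell$ is even and hence fix $\ell/2$; your route is a bit more elementary and avoids the external reference.
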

\begin{proof}
Suppose that $B(X)$ is an arc-transitive circulant. Since $B(X)$ is also a connected circulant, by Lemma~\ref{lem:KovacsLi}, it follows that $B(X)$ is a complete graph, normal circulant, wreath product or a deleted wreath product. 
If $B(X)$ is a complete graph, then it must be isomorphic to $K_1$ or $K_2$, since they are the only bipartite complete graphs. However, it is easy to see that none of them is a canonical double cover of some graph. Lemma~\ref{lem:normal are stable} implies that $B(X)$ is not a normal circulant. If $B(X)$ is a wreath product, then $B(X)$ is reducible, hence by Lemma~\ref{lem:worhtydoublecover} it follows that $X$ is also reducible, contrary to the assumption that $X$ is nontrvialy unstable.
Therefore, we may assume that $B(X)$ is the deleted wreath product. Suppose that $B(X) \cong \Sigma \wr_d \overline{K_d}$, where $2m = td$, $d > 3$, $gcd(d,t) = 1$ and $\Sigma$ is a connected
arc-transitive circulant of order $t$. 
Since $B(X)$ is bipartite and $d>3$ it follows that $\Sigma$ is also bipartite, hence order of $\Sigma$ is even.

If $\Sigma$ is reducible, then $\Sigma\cong \Sigma_1\wr \ov{K_{d_1}}$, where $\Sigma_1$ is an irreducible circulant. Then by \cite[Proposition 4.5]{DMS19}, it follows that $B(X)\cong \Sigma \wr_d \overline{K_d}\cong (\Sigma_1 \wr_d \ov{K_d})\wr \ov{K_{d_1}}$, implying that $B(X)$ is reducible. By Lemma~\ref{lem:worhtydoublecover} it follows that $X$ is also reducible, a contradiction.

Suppose now that $\Sigma$ is irreducible. Then by Lemma~\ref{lem:TedStefko} it follows that $Aut(B(X))\cong \Aut(\Sigma) \times S_d$. Recall that $\Sigma$ is a connected arc-transitive bipartite circulant of even order. 
If $\Sigma$ is a wreath product, then again $B(X)$ is a wreath product, hence $B(X)$ is reducible, and consequently also $X$ is reducible, contrary to the assumption that $X$ is nontrivialy unstable. We conclude that $\Sigma$ is a normal circulant, or $\Sigma\cong \Sigma_1 \wr_d \ov{K_{d_1}}$, where $\Sigma_1$ is an arc-transitive circulant. Applying the same arguments for $\Sigma_1$, we conclude that $\Sigma_1$ is a normal circulant, or a deleted wreath product. As $\Gamma$ is finite, this process has to terminate, so eventually we will get $B(X)=(\Sigma_t \wr_d \ov{K_{d_t}})\wr_d \ldots \wr_d \ov{K_d}$, where $\Sigma_t$ is a normal circulant of even order and $\Aut(B(X))\cong \Aut(\Sigma_t) \times S_{d_t}\times \ldots \times S_d$. 

Observe that $S_{d_t}\times \ldots \times S_d$ is a normal subgroup of $\Aut(B(X))$, hence its orbits form a system of imprimitivity for $\Aut(B(X))$. Let $\mathcal{P}$ denote the set of orbits of $S_{d_t}\times \ldots \times S_d$ on $V(B(X))$. Observe that the quotient graph of $B(X)$ with respect to $\mathcal{P}$ is isomorphic to $\Sigma_t$.
 The group $(\ZZ_{2m})_L$ projects into cyclic regular subgroup of $\Aut(\Sigma_t)$. For $g\in \Aut(B(X))$, let $g/_{\mathcal{P}}$ denote the permutation induced by the action of $g$ on $\mathcal{P}$.
Since $\Sigma_t$ is a normal circulant, by Lemma~\ref{lem:normal cyclic regular} it follows that $(\ZZ_{2m})_L/_{\mathcal{P}}$ is a normal cyclic regular subgroup of $\Aut(\Sigma_t)$, hence $m_L/_{\mathcal{P}}$ is central in  $\Aut(\Sigma_t)$. If follows that $m_L$ is central in $\Aut(\Sigma_t) \times S_{d_t}\times \ldots \times S_d$.
The result now follows by Lemma~\ref{lem:central}.
\end{proof}

In the following lemma, we consider graphs that can be realised as a canonical double cover of some  connected arc-transitive circulant of odd order, and derive certain important properties of their automorphism groups.

\begin{lemma}\label{lem:key lemma}
 Let $m$ be an odd positive integer, and let $\Gamma$ be a connected bipartite arc-transitive circulant of order $2m$ and even  valency, and let $A=Aut(\Gamma)$.
 Then one of the following holds:
 \begin{enumerate}[(i)]
 \item $A_0=A_m$, or
 \item $\Gamma \cong \Gamma_1 \wr \ov{K_d}$ where $\Gamma_1$ is an irreducible arc-transitive circulant of even order $2m_1$ and $\Aut(\Gamma_1)_0=\Aut(\Gamma_1)_{m_1}$.
 \end{enumerate}
 \end{lemma}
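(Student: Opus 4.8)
The plan is to reduce the statement to a claim about irreducible graphs and then prove that claim by induction. Concretely, I would first record:

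\medskip\noindent\textbf{Claim.} If $\Delta=\Cay(\ZZ_{2k},S)$ is an irreducible connected bipartite arc-transitive circulant of even order $2k$, then $\Aut(\Delta)_0=\Aut(\Delta)_k$.

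\medskip\noindent Granting the Claim, the lemma is short. If $\Gamma$ is irreducible, the Claim applied to $\Gamma$ gives alternative (i). If $\Gamma$ is reducible, then ``having the same neighbourhood'' is an $\Aut(\Gamma)$-invariant (hence $(\ZZ_{2m})_L$-invariant) equivalence relation on $V(\Gamma)$ whose classes all have the same size $D>1$, so $\Gamma\cong\Gamma_1\wr\ov{K_D}$ with $\Gamma_1$ the corresponding (irreducible) quotient; $\Gamma_1$ is a connected bipartite arc-transitive circulant, and being connected, bipartite and vertex-transitive it has even order $2m_1=2m/D$, so $D\mid m$ is odd. The Claim applied to $\Gamma_1$ gives $\Aut(\Gamma_1)_0=\Aut(\Gamma_1)_{m_1}$, which is exactly alternative (ii).

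To prove the Claim I would induct on $k$, applying Lemma~\ref{lem:KovacsLi} to $\Delta$. A complete graph that is bipartite is $K_2$, and then both stabilizers are trivial. A wreath product $\Sigma\wr\ov{K_d}$ with $d>1$ is reducible, so that case does not occur. If $\Delta$ is a normal circulant, then $\Aut(\Delta)_0=\Aut(\ZZ_{2k},S)$, and since $k$ is the unique element of order $2$ in $\ZZ_{2k}$ it is fixed by every group automorphism, so $k_L$ centralises $\Aut(\ZZ_{2k},S)$ and $\Aut(\Delta)_k=k_L\Aut(\Delta)_0k_L^{-1}=\Aut(\Delta)_0$ (this is the argument in the proof of Lemma~\ref{lem:normal are stable}). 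The remaining, and main, case is $\Delta\cong\Sigma\wr_d\ov{K_d}$ with $d>3$, $\gcd(d,|V(\Sigma)|)=1$ and $\Sigma$ a connected arc-transitive circulant. Exactly as in the proof of Lemma~\ref{lem:arc-transitive}, $\Sigma$ is bipartite; being connected, bipartite and vertex-transitive it has even order $2k'$, and $\gcd(d,2k')=1$ forces $d$ odd, so $k=dk'$ with $d$ odd and $k'<k$. If $\Sigma$ were reducible, then by \cite[Proposition~4.5]{DMS19} $\Delta$ would be a wreath product, hence reducible; so $\Sigma$ is irreducible, and Lemma~\ref{lem:TedStefko} gives $\Aut(\Delta)=\Aut(\Sigma)\times S_d$ acting coordinatewise on $V(\Sigma)\times\ZZ_d$. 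Now $(\ZZ_{2k})_L$ is cyclic and regular on $V(\Sigma)\times\ZZ_d$; its image in $S_d$ is therefore a transitive cyclic, hence regular, subgroup of order $d$, and its image in $\Aut(\Sigma)$ — equivalently, its action on the block system $\{\{v\}\times\ZZ_d:v\in V(\Sigma)\}$ — is a regular cyclic subgroup $C\le\Aut(\Sigma)$ of order $2k'$. Writing vertex $0$ of $\Delta$ as $(a_0,b_0)$ and using $k=dk'$ with $d$ odd, the $S_d$-component of the involution $k_L$ is the $k$-th power of a $d$-cycle, hence trivial, while its $\Aut(\Sigma)$-component is the unique involution $\theta$ of $C$; thus vertex $k=k_L(0)$ is $(\theta(a_0),b_0)$, and $\Aut(\Delta)_0=\Aut(\Delta)_k$ becomes $\Aut(\Sigma)_{a_0}=\Aut(\Sigma)_{\theta(a_0)}$. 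Since $C$ is regular, $\Sigma$ is a Cayley graph on $C\cong\ZZ_{2k'}$ in which $\theta$ is the order-$2$ element and $\theta(a_0)$ differs from $a_0$ by $\theta$; as $\Sigma$ is an irreducible connected bipartite arc-transitive circulant of even order $2k'<2k$, the inductive hypothesis (after conjugating by the translation $(a_0)_L\in C$) yields $\Aut(\Sigma)_{a_0}=\Aut(\Sigma)_{\theta(a_0)}$, closing the induction.

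I expect the bookkeeping in this last case to be the main obstacle: one must verify carefully, using the coprime factorisation $\ZZ_{2k}\cong\ZZ_{2k'}\times\ZZ_d$ and the semiregularity of $(\ZZ_{2k})_L$, that the two projections of $(\ZZ_{2k})_L$ into $\Aut(\Sigma)$ and $S_d$ really are regular of the expected orders $2k'$ and $d$, and hence that $k_L$ has trivial $S_d$-component and its $\Aut(\Sigma)$-component is precisely the order-$2$ element of a regular cyclic subgroup — only then can the inductive hypothesis be invoked for $\Sigma$ in the right form. Everything else (the reducible-to-irreducible reduction, the complete and normal cases, and ``$\Delta$ bipartite $\Rightarrow\Sigma$ bipartite'') is routine.
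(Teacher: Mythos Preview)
Your proof is correct, but it is organised quite differently from the paper's argument. The paper exploits the even-valency hypothesis to recognise $\Gamma$ as the canonical double cover $B(X)$ of a connected non-bipartite circulant $X$ of odd order $m$ (via the strongly switching involution $m_L$), and then invokes Lemma~\ref{lem:arc-transitive} twice: once for $X$ to get alternative~(i) or force $X$ reducible, and once for the irreducible core $\Sigma$ of $X$ to obtain alternative~(ii) with $\Gamma_1=B(\Sigma)$. In other words, the paper deduces Lemma~\ref{lem:key lemma} from Lemma~\ref{lem:arc-transitive} rather than arguing directly on $\Gamma$.

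Your route bypasses the double-cover viewpoint entirely: you isolate the statement for irreducible $\Gamma$ as a separate Claim and prove it by a direct induction through the Kov\'acs--Li classification, handling the deleted wreath case by projecting the regular cyclic subgroup onto the two factors of $\Aut(\Sigma)\times S_d$. This is essentially a repackaging of the same ingredients that go into Lemma~\ref{lem:arc-transitive} (Kov\'acs--Li, Lemma~\ref{lem:TedStefko}, the ``reducible $\Sigma$ forces $\Delta$ reducible'' step), but fused into a single inductive argument. A small bonus of your approach is that the even-valency hypothesis is never used, so you actually prove a marginally stronger statement; the price is the extra bookkeeping you flag at the end---verifying that the two projections of $(\ZZ_{2k})_L$ are regular of orders $2k'$ and $d$ (which follows from the block-stabiliser count $|\ker\pi_1|=|G|/\#\text{blocks}=d$), and that the $S_d$-component of $k_L$ vanishes because $d$ is odd.
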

 \begin{proof}
Since $\Gamma$ is a circulant of order $2m$ and has even valency, it follows that $\Gamma\cong Cay(\ZZ_{2m},S)$ where $S$ does not contain element $m$. This implies that the automorphism $m_L:x \mapsto m+x$ is a strongly switching involution of $\Gamma$. By Lemma~\ref{lem:strongly switching involution}, it follows that $\Gamma$ is the canonical double cover of the graph $X$ obtained as the quotient graph of $\Gamma$ with respect to the orbits of $m_L$. Observe that $X$ is a connected and non-bipartite circulant of order $m$, since $\Gamma$ is a connected circulant of order $2m$.

If $X$ is a stable graph, then by Lemma~\ref{lem:equalstabilizers0m} it follows that $\Gamma$ satisfies condition $(i)$, and we are done.
As $\Gamma$ is arc-transitive, by Lemma~\ref{lem:arc-transitive} it follows that $X$ is not nontrivialy unstable.  
Therefore, $X$ is trivially unstable, and since it is connected and non-bipartite, it follows that $X$ is reducible. We conclude that $X\cong \Sigma \wr \ov{K_d}$, where $\Sigma$ is a connected irreducible circulant of order $m_1$, with $m_1$ being odd.
 
Observe that $\Gamma\cong X\times K_2\cong (\Sigma \wr \ov{K_d}) \times K_2\cong (\Sigma \times K_2)\wr \ov{K_d}$. Let $\Gamma_1=\Sigma \times K_2$.
Since $\Gamma$ is an arc-transitive circulant, it follows that $\Gamma_1$ is an arc-transitive circulant (see \cite[Remark 1.2]{LMM}). Since $\Gamma_1$ is a canonical double cover of $\Sigma$, by Lemma~\ref{lem:arc-transitive} it follows that $\Sigma$ is not non-trivially unstable. Recall that $\Sigma$ is irreducible, connected and non-bipartite. We conclude that $\Sigma$ is stable, hence by Lemma~\ref{lem:equalstabilizers0m} it follows that $\Aut(\Gamma_1)_0=\Aut(\Gamma_1)_{m_1}$.
 \end{proof}

We are now ready to prove the main result of this paper. We will show that there is no non-trivially unstable circulant of odd order.

\begin{sloppypar}
\begin{theorem3}
Let $X$ be a connected irreducible circulant of odd order and let $B(X)$ be its canonical double cover. Then $\Aut(B(X))\cong \Aut(X)\times \ZZ_2$.
\end{theorem3}
\end{sloppypar}
 \begin{proof}
 Let $X=Cay(\ZZ_{m},S)$ be a connected irreducible circulant of odd order $m$. If $X$ is stable, the result follows by the definition. It is clear that $X$ is non-bipartite, as it is of odd order. Hence, we may assume that $X$ is a nontrivially unstable. We have that $B(X)=Cay(\ZZ_{2m},S')$ is a circulant of order $2m$, where $S'=m+2S$. Let $A=\Aut(B(X))$. If $A_0$ is transitive on $S'$, then $B(X)$ is arc-transitive, and the result follows by Lemma~\ref{lem:arc-transitive}. Let $S_1,\ldots,S_k$ be the orbits of $A_0$ on $S'$. Observe that $S_i=-S_i$ (since mapping $i:x\mapsto -x$ is contained in $A_0$), and $m\not \in S_i$, for every $i\in \{1,\ldots,k\}$. Let $\Gamma_i= Cay(\langle S_i \rangle,S_i)$. Observe that $\langle S_i \rangle$ is a subgroup of $\ZZ_{2m}$ of even order, hence it contains the element of order $2$, that is $m\in \langle S_i\rangle$ for every $i\in \{1,\ldots,k\}$.
By Lemma~\ref{lem:stabilizer of a set fixes a subgroup} every element of $A_0$ fixes $\Gamma_i$, hence it follows that every element of $A_0$ induces an automorphism of $\Gamma_i$. As $A_0$ acts transitively on $S_i$ it follows that $\Gamma_i$ is arc-transitive. Therefore, $\Gamma_i$ is an arc-transitive circulant of even order (not divisible by 4) and even valency. Hence we can apply Lemma~\ref{lem:key lemma} to each of the graphs $\Gamma_i$. If for some $i$ we have that $\Gamma_i$ satisfies condition $1$ of Lemma~\ref{lem:key lemma}, it follows that every automorphism of $B(X)$ that fixes $0$, must also fix $m$, hence $A_0=A_m$ and by Lemma~\ref{lem:equalstabilizers0m} it follows that $X$ is stable, a contradiction.

We can now assume that $\Gamma_i\cong \Sigma_i\wr \ov{K_{d_i}}$, where $\Sigma_i$ is an irreducible arc-transitive circulant of even order satisfying condition $(i)$ from Lemma~\ref{lem:key lemma}. 
Let $R$ be the equivalence relation of ``having the same neighbourhood'' defined on $V(\Gamma_i)$. The equivalence classes of this relation are all of size $d_i$, and form a system of imprimitivity for $\Aut(\Gamma_i)$. Let $H_i$ be the kernel of the action of $\langle S_i \rangle_L$ (which is a regular cyclic subgroup of $\Aut(\Gamma_i)$) on the partition induced by $R$. The permutation group $\langle S_i \rangle_L/H_i$ induced by the action of $\langle S_i \rangle_L$ on the classes of $R$ is a cyclic regular group.
Since $H_i$ is semiregular on $V(\Gamma_i)$, it follows that the equivalence classes of $R$ coincide with the orbits of $H_i$. 
It follows that  $S_i$ is a union of cosets of the subgroup $H_i$ of $\langle S_i \rangle$. Observe that the element of order $2$ in the quotient group $\langle S_i \rangle / H_i$ is $m+H_i$.

Recall that $\Sigma_i$ has the property that the stabilizer of the identity and the element of order $2$ in the cyclic regular subgroup of $\Aut(\Sigma_i)$ are equal, which implies that every automorphism of $\Gamma_i$ that fixes $0$, fixes setwise coset $m+H_i$. As every automorphism of $\Gamma$ that fixes $0$ induces automorphism of $\Gamma_i$, it follows that every automorphism of $\Gamma$ that fixes $0$ fixes setwise $m+H_i$. If $GCD(d_1,\ldots,d_k)=d>1$, then $H=H_1\cap \ldots \cap H_k$ has order $d$. It follows that $S'$ is a union of cosets of $H$, hence  $B(X)$ is a wreath product with $\ov{K_d}$. This shows that  $B(X)$ is reducible, and by Lemma~\ref{lem:worhtydoublecover} it follows that $X$ is also reducible, contrary to the assumption that $X$ is nontrivially unstable circulant.

If $GCD(d_1,\ldots,d_k)=1$, then it follows that $H_1\cap \ldots \cap H_k=\{0\}$. As observed above, every automorphism of $\Gamma$ fixes setwise each of the sets $m+H_i$, for $i\in \{1,\ldots,k\}$, hence it also fixes their intersection. Since $(m+H_1) \cap \ldots \cap (m+H_k)=\{m\}$, by Lemma~\ref{lem:equalstabilizers0m} it follows that $X$ is stable, a contradiction. This finishes the proof.
 \end{proof}
 
 For further research, we propose the following problem.
 
 \begin{problem}
 Does there exist a nontrivially unstable Cayley graph on an Abelian group of odd order.
 \end{problem}
 
 \section*{Acknowledgement}
 This work has been supported by the Slovenian
Research Agency (Young researchers program, research programs P1-0404 and P1-0285, and research projects J1-1691, J1-1694, J1-1695,
J1-9110, N1-0102 and N1-0140).

\end{document}